\def\@settitle{\begin{center}%
    \bfseries
 \normalfont\LARGE\@title
  \end{center}%
}
\def\@setauthors{\begin{center}%
 \normalsize\@author
  \end{center}%
}
\numberwithin{equation}{section}
\renewcommand{\cal}{\mathcal}
\newcommand{\cD}{{\cal D}}
\newcommand{\cE}{{\cal E}}
\newcommand{\cG}{{\cal G}}
\newcommand\cH{{\mathcal H}}
\newcommand{\cL}{{\cal L}}
\newcommand{\cM}{{\cal M}}
\newcommand{\fd}{{\mathfrak d}}
\newcommand{\fj}{{\mathfrak j}}
\newcommand{\fe}{{\mathfrak e}}
\newcommand{\bmh}{{\bm{h}}}
\newcommand{\bmu}{{\bm{u}}}
\newcommand{\rd}{{\rm d}}
\newcommand{\ri}{\mathrm{i}}
\newcommand{\bE}{\mathbb{E}}
\newcommand{\bP}{\mathbb{P}}
\newcommand{\bR}{{\mathbb R}}
\newcommand{\bZ}{\mathbb{Z}}
\newcommand{\al}{\alpha}
\newcommand{\la}{\lambda}
\DeclareMathOperator{\diag}{diag}
\DeclareMathOperator{\OO}{O}
\DeclareMathOperator{\oo}{o}
\renewcommand{\Re}{\mathop{\mathrm{Re}}}
\renewcommand{\Im}{\mathop{\mathrm{Im}}}
\renewcommand{\leq}{\leqslant}
\renewcommand{\geq}{\geqslant}
\newcommand{\floor}[1] {\lfloor {#1} \rfloor}
\newcommand{\qq}[1]{[\![{#1}]\!]}
\newcommand{\beq}{\begin{equation}}
\newcommand{\eeq}{\end{equation}}
\theoremstyle{plain} 
\newtheorem{theorem}{Theorem}[section]
\newtheorem*{theorem*}{Theorem}
\newtheorem*{lemma*}{Lemma}
\newtheorem{corollary}[theorem]{Corollary}
\newtheorem*{corollary*}{Corollary}
\newtheorem{proposition}[theorem]{Proposition}
\newtheorem*{proposition*}{Proposition}
\newtheorem*{assumption*}{Assumption}
\newtheorem{claim}[theorem]{Claim}
\newtheorem{definition}[theorem]{Definition}
\newtheorem*{definition*}{Definition}
\newtheorem*{example*}{Example}
\newtheorem{remark}[theorem]{Remark}
\newtheorem*{remark*}{Remark}
\newtheorem*{remarks*}{Remarks}
\def\author#1{\par
    {\centering{\authorfont#1}\par\vspace*{0.05in}}
}
\def\titlefont{\fontsize{13}{15}\bfseries\boldmath\selectfont\centering{}}
\def\authorfont{\fontsize{13}{15}}
\let\affiliationfont\rhfont
\def\address#1{\par
    {\centering{\affiliationfont#1\par}}\par\vspace*{11pt}
}
\def\body{
\setcounter{footnote}{0}
\def\thefootnote{\alph{footnote}}
\def\@makefnmark{{$^{\rm \@thefnmark}$}}
}
\def\title#1{
    \thispagestyle{plain}
    \vspace*{-14pt}
    \vskip 79pt
    {\centering{\titlefont #1\par}}%
    \vskip 1em
}
\begin{document}

\title{Eigenvalues for the Minors of Wigner Matrices}

\vspace{1.2cm}

 \author{Jiaoyang Huang}
\address{Harvard University\\
   E-mail: jiaoyang@math.harvard.edu}

~\vspace{0.3cm}

\begin{abstract}
The eigenvalues for the minors of real symmetric ($\beta=1$) and complex Hermitian ($\beta=2$) Wigner matrices form the Wigner corner process, which is a multilevel interlacing particle system.
In this paper, we study the microscopic scaling limit of the Wigner corner process both near the spectral edge and in the bulk, and prove they are universal.
We show: (\rm i)
Near the spectral edge, the corner process exhibit a decoupling phenomenon, as first observed in \cite{MR3558206}. Individual extreme particles have Tracy-Widom$_{\beta}$ distribution; the spacings between the extremal particles on adjacent levels converge to independent Gamma distributions in a much smaller scale.
(\rm ii)
In the bulk, the microscopic scaling limit of the Wigner corner process is given by the bead process for general Sine$_\beta$ process, as constructed recently in \cite{NaVi}.
\end{abstract}

\section{Introduction}
Wigner matrices were introduced by E. Wigner to model the nuclei of heavy atoms in \cite{MR0077805,MR0083848}. He postulated that the gaps between the energy levels of large quantum systems should resemble the eigenvalue gaps of a random matrix resemble, which should depend only on the symmetry class of the model and are independent of the details of the matrix ensemble. Since then, the universality phenomenon of the local eigenvalue statistics has been a central subject of study in random matrix theory. The universality of random matrices can be roughly divided into the edge universality near the spectral edge and the bulk universality in the interior of the spectrum.  The edge universality means the joint law of extreme eigenvalues of Wigner matrices converges to the Tracy-Widom$_\beta$ distribution, which was first identified by Tracy and Widom in \cite{MR1257246, MR1385083}. For the bulk universality, Wigner's original point of view asked for the universality of the eigenvalue gap distributions. Later, Mehta formalized the bulk universality conjecture in his book \cite{MR2129906} and stated that the correlation functions of Wigner matrices should coincide with those of Gaussian orthognal/unitary ensemble (GOE/GUE) asymptotically, which are characterized by Sine$_\beta$ process.

Over the past three decades, spectacular progress on edge and bulk universality conjecture for Wigner matrices has been made. The edge universality was proven first via the moment method and its various generalization \cite{MR1647832,MR1727234, MR3729037} and later by the comparison argument \cite{MR2871147, MR2669449}. The bulk universality for Wigner matrices of all symmetry classes was proven in \cite{MR2661171,MR3306005,MR3372074} for the eigenvalue gap universality, and in \cite{MR1810949, MR2662426, MR2810797, MR2919197, MR2661171,MR3541852,MR3914908} for the universality of correlation functions.
Later, edge and bulk universality conjecture was proven for more general Wigner type random matrices in \cite{MR2847916, MR2981427} for generalized Wigner matrices, in \cite{MR3405746, MR3502606} for deformed Wigner matrices, in \cite{MR3429490,MR2964770,MR3729611} for sparse random matrices, in \cite{MR3916110, ALY} for heavy tailed random matrices and in \cite{MR3941370,MR3949269,MR3629874, AEKS} for correlated random matrices.

The GUE corner process, which is a multilevel interlacing particle process, was first studied by Y. Baryshnikov \cite{MR1818248}, where he showed the largest eigenvalues of minors of GUE have the same law as Brownian last passage percolation. Later, it was proven in \cite{MR2268547} that GUE corner process is a determinantal point process, and converges to the bead process as introduced in \cite{MR2489161}. More general random matrix corner processes with determinantal structure were studied in \cite{MR3149438}. The Hermite $\beta$ corner process as the $\beta$ analogue of GUE corner process was introduced by V. Gorin and M. Shkolnikov \cite{MR3418747}, and they showed in \cite{MR3558206} the spacings between the extremal particles on adjacent levels converges to independent Gamma distributions.  The bulk scaling limit of Hermite $\beta$ corner process was understood recently. In \cite{NaVi},  J. Najnudel and B. Vir{\'a}g constructed the bead process for general Sine$_\beta$ process, and proved this process is the microscopic scaling limit in the bulk of the Hermite $\beta$ corner process.  In the current work, we study the eigenvalues for the minors of Wigner matrices, called the Wigner corner process. When restricted to a line, the edge and bulk universality results indicate that the Wigner corner process is characterized by the Tracy-Widom$_\beta$ distribution near the spectral edge, and converges to the Sine$_\beta$ process in the bulk. In the present work we study the joint asymptotic behaviors of the Wigner corner process.  We prove that the scaling limits of Wigner corner process near the spectral edge and in the bulk are universal as long as the matrix entries have finite fourth moment. Near the spectral edge the Wigner corner process exhibit a decoupling phenomenon (see Figure \ref{fig}):  Individual extreme particles have Tracy-Widom$_{\beta}$ distribution; the spacings between the extremal particles on adjacent levels converge to independent Gamma distributions in a much smaller scale.  This answers an open problem \cite[Problem 8]{Open} by Vu.  The Wigner corner process near the spectral edge in a different scaling scale was studied in \cite{MR3403994}. In the bulk, the microscopic scaling limit of the Wigner corner process is given by the bead process for general Sine$_\beta$ process, as constructed in \cite{NaVi}.

\noindent\textbf{Acknowledgement.} I am thankful to Benjamin Landon and Joseph Najnudel for helpful discussions.

\subsection{Main Results}\label{s:main}
In this work, we consider the following class of random matrices, called Wigner matrices.
\begin{definition}\label{def:gW}
A Wigner matrix $H_N=[h_{ij}]_{1\leq i,j\leq N}$ is a real symmetric or complex Hermitian $N\times N$ matrix whose upper-triangular elements $h_{ij}=\bar h_{ji}$, $i\leq j$ are independent random variables with mean zero and variances $\bE[|h_{ij}|^2]=1$.
In the Hermitian case, we assume $\Re[h_{ij}], \Im[h_{ij}]$ are independent and 
$\bE[\Re[h_{ij}]^2]=\bE[\Im[h_{ij}]^2]=1/2$ for $i<j$.
We also assume that the matrix elements $h_{ij}$ have finite fourth moment
\begin{align*}
\bE[|h_{ij}|^4]<\infty.
\end{align*}
\end{definition}

\begin{remark}
We restrict ourselves to Wigner matrices whose entries have bounded fourth moment. This is the minimal assumption that the edge universality of Wigner matrices holds \cite{MR3161313}. However the bulk universality is more robust. We believe our Theorem \ref{t:bulk} can be possibly generalized to sparse random matrices e.g. \cite{MR3429490,MR2964770}, and heavy tailed random matrices e.g. \cite{MR3916110, ALY}.
\end{remark}

Fix a large integer $K\geq 0$. Let $H_{N+K}$ be a Wigner matrix as in Definition \ref{def:gW}. For any integers $0\leq s\leq K$, we denote $H^{(s)}_{N+K}=[h_{ij}]_{1\leq i,j\leq N+s}$ the top-left $(N+s)\times (N+s)$ minor of $H_{N+K}$, with eigenvalues $\lambda_1^{(s)}\leq\la_2^{(s)}\leq \cdots \leq \la_{N+s}^{(s)}$, and corresponding normalized eigenvectors $\bmu_1^{(s)},\bmu_2^{(s)},\cdots, \bmu_{N+s}^{(s)}$. Then $H_{N+K}^{(s)}$ has spectral decomposition $H_{N+K}^{(s)}=U^{(s)} \Lambda^{(s)} (U^{(s)})^*$, where $\Lambda^{(s)}=\diag\{\la_1^{(s)}, \la_2^{(s)}, \cdots, \la_{N+s}^{(s)}\}$, and $U^{(s)}=[\bmu_1^{(s)}, \bmu_2^{(s)},\cdots,\bmu_{N+s}^{(s)}]$. The eigenvalues of $H_{N+K}^{(s)}$ interlace with the eigenvalues of $H_{N+K}^{(s+1)}$: 
\begin{align*}
\la_1^{(s+1)}\leq \la_1^{(s)}\leq \la_2^{(s+1)}\leq \la_2^{(s)}\leq \cdots  \leq \la_{N+s}^{(s+1)}\leq \la_{N+s}^{(s)}\leq\la_{N+s+1}^{(s+1)}.
\end{align*}

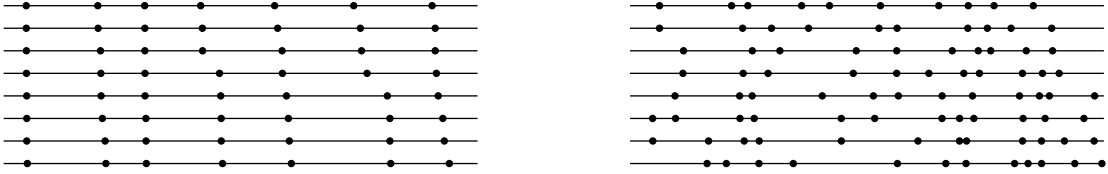
\begin{figure}[h]
\begin{subfigure}{0.5\textwidth}
\centering
\begin{tikzpicture}[scale=0.3]
\foreach \a in {0,1,2,3,4,5,6,7}
\draw [line width=0.5pt] (-1,\a)-- (20.,\a);

\foreach \a in {0.0000,3.1731,5.2498,7.7312,11.0081,14.5133,17.9859}
\draw[fill=black] ( \a,7) circle (4pt);
\foreach \a in {0.0024,3.1863,5.2569,7.8066,11.1411,14.8083,18.1232}
\draw[fill=black] ( \a,6) circle (4pt);
\foreach \a in {0.0082,3.2907,5.2588,7.8152,11.3392,14.8583,18.1242}
\draw[fill=black] ( \a,5) circle (4pt);
\foreach \a in {0.0106,3.3085,5.2631,8.5652,11.3521,15.1045,18.1771}
\draw[fill=black] ( \a,4) circle (4pt);
\foreach \a in {0.0234,3.3176,5.2647,8.6201,11.5327,16.0021,18.2554}
\draw[fill=black] ( \a,3) circle (4pt);
\foreach \a in {0.0237,3.3758,5.3056,8.6406,11.6189,16.1164,18.4613}
\draw[fill=black] ( \a,2) circle (4pt);
\foreach \a in {0.0248,3.4953,5.3095,8.6423,11.6581,16.1173,18.5255}
\draw[fill=black] ( \a,1) circle (4pt);
\foreach \a in {0.0496,3.5313,5.3237,8.6992,11.7483,16.1517,18.7498}
\draw[fill=black] ( \a,0) circle (4pt);
\end{tikzpicture}
\end{subfigure}
\begin{subfigure}{0.5\textwidth}
\centering
\begin{tikzpicture}[scale=0.3]
\foreach \a in {0,1,2,3,4,5,6,7}
\draw [line width=0.5pt] (-1,\a)-- (20.,\a);

This is the 0-th minor0.3071,3.5005,4.2224,6.6063,7.8410,10.0985,12.6823,13.9890,15.1321,16.8714,
This is the 1-th minor0.3078,3.9910,5.2710,6.9133,10.0325,10.8364,13.9702,14.8368,15.8864,17.6866,
This is the 2-th minor1.3720,4.4187,5.6423,9.0243,10.8177,13.2782,14.4312,14.9862,16.5588,17.7235,
This is the 3-th minor1.3453,4.0154,5.1170,8.8932,10.8172,12.2446,13.7906,14.4845,16.3900,17.2770,18.0172,
This is the 4-th minor0.9945,3.8579,4.4127,7.5175,9.7865,10.8922,12.8296,14.1775,16.2590,17.1423,17.5848,19.5894,
This is the 5-th minor0.0024,1.0170,3.8722,4.5043,8.3617,9.8430,12.8260,13.6057,14.2365,16.4184,17.3934,19.1129,
This is the 6-th minor0.0108,2.4814,4.0594,4.7250,8.3665,11.7608,13.6014,13.9148,16.3947,17.2351,18.2512,19.5654,
This is the 7-th minor2.4148,3.2695,4.7138,6.2364,10.8499,12.9958,13.8907,16.0434,16.6399,17.2378,18.7095,19.9111,

\foreach \a in {0.3071,3.5005,4.2224,6.6063,7.8410,10.0985,12.6823,13.9890,15.1321,16.8714}
\draw[fill=black] ( \a,7) circle (4pt);
\foreach \a in {0.3078,3.9910,5.2710,6.9133,10.0325,10.8364,13.9702,14.8368,15.8864,17.6866}
\draw[fill=black] ( \a,6) circle (4pt);
\foreach \a in {1.3720,4.4187,5.6423,9.0243,10.8177,13.2782,14.4312,14.9862,16.5588,17.7235}
\draw[fill=black] ( \a,5) circle (4pt);
\foreach \a in {1.3453,4.0154,5.1170,8.8932,10.8172,12.2446,13.7906,14.4845,16.3900,17.2770,18.0172}
\draw[fill=black] ( \a,4) circle (4pt);
\foreach \a in {0.9945,3.8579,4.4127,7.5175,9.7865,10.8922,12.8296,14.1775,16.2590,17.1423,17.5848,19.5894}
\draw[fill=black] ( \a,3) circle (4pt);
\foreach \a in {0.0024,1.0170,3.8722,4.5043,8.3617,9.8430,12.8260,13.6057,14.2365,16.4184,17.3934,19.1129}
\draw[fill=black] ( \a,2) circle (4pt);
\foreach \a in {0.0108,2.4814,4.0594,4.7250,8.3665,11.7608,13.6014,13.9148,16.3947,17.2351,18.2512,19.5654}
\draw[fill=black] ( \a,1) circle (4pt);
\foreach \a in {2.4148,3.2695,4.7138,6.2364,10.8499,12.9958,13.8907,16.0434,16.6399,17.2378,18.7095,19.9111}
\draw[fill=black] ( \a,0) circle (4pt);
\end{tikzpicture}
\end{subfigure}

\label{fig}
\caption{The left pane is the eigenvalues for the minors of a $5000\times 5000$ Wigner matrix near the spectral edge.  Individual eigenvalues have Tracy-Widom$_{\beta}$ distribution on the scale $N^{-1/6}$; the spacings between the eigenvalues on adjacent levels converge to independent Gamma distributions on a much smaller scale , i.e. $N^{-1/2}$. The right pane is the eigenvalues for the minors of a $5000\times 5000$ Wigner matrix in the bulk.
The limit of the process converges to the bead process for GOE/GUE process on the scale $N^{-1/2}$  }
\label{fig}
\end{figure}

The first main result of our paper studies the asymptotic behaviors of the eigenvalues of the minors $H^{(s)}_{N+K}$ for $0\leq s\leq K$, around the spectral edge.  This answers an open problem \cite[Problem 8]{Open} by Vu.
 
\begin{theorem}[Edge Case]\label{t:edge}
Fix integers $K,\ell\geq 1$. Let $H_{N+K}$ be a Wigner matrix (as in Definition \ref{def:gW}). The vector 
\begin{align}\label{e:TWbeta}
N^{1/6}(\la_1^{(0)}+2\sqrt{N}),\; N^{1/6}(\la_2^{(0)}+2\sqrt{N}),\;\cdots\;, N^{1/6}(\la_\ell^{(0)}+2\sqrt{N}),
\end{align}
converges in distribution in the limit $N\rightarrow \infty$ to the Tracy-Widom$_\beta$ distribution.
The array 
\begin{align}\label{e:Gammabeta}
\left(\sqrt{N}(\la_1^{(s)}-\la_1^{(s+1)}),\; \sqrt{N}(\la_2^{(s)}-\la_2^{(s+1)}),\;\cdots,\; \sqrt{N}(\la_\ell^{(s)}-\la_\ell^{(s+1)})\right)_{0\leq s\leq K-1},
\end{align}
converges in distribution in the limit $N\rightarrow \infty$ to a random array with independent Gamma distributed entries, which has density
$
C_\beta x^{\beta/2-1}e^{-\beta x/2 }.
$
Moreover, \eqref{e:TWbeta} and \eqref{e:Gammabeta} are asymptotically independent.
The same statement holds for the right spectral edge.
\end{theorem}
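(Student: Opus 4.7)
The plan is to reduce \eqref{e:TWbeta} to the classical edge universality of Wigner matrices under the finite fourth moment hypothesis \cite{MR3161313}, and to extract \eqref{e:Gammabeta} from a secular equation analysis of the one-row extension $H_{N+K}^{(s)} \to H_{N+K}^{(s+1)}$ combined with a conditional central limit theorem for eigenvector overlaps. The asymptotic independence between the two statistics then follows from quantum unique ergodicity at the edge and a scale separation argument.

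Writing $H_{N+K}^{(s+1)}$ as a block matrix with $H_{N+K}^{(s)}$ in the upper-left corner, appended column $\bmh^{(s)} \in \bC^{N+s}$ and diagonal entry $h_{*}^{(s)}$, the eigenvalues $\mu$ of $H_{N+K}^{(s+1)}$ that are not eigenvalues of $H_{N+K}^{(s)}$ satisfy the secular equation
\begin{align}
h_{*}^{(s)} - \mu \;=\; \sum_{i=1}^{N+s}\frac{|\langle\bmu_i^{(s)},\bmh^{(s)}\rangle|^2}{\la_i^{(s)}-\mu}.
\end{align}
The interlacing gives $\eta_j := \la_j^{(s)}-\la_j^{(s+1)} \geq 0$, and isolating the singular term at $i=j$ in the sum yields
\begin{align}
\eta_j \;=\; \frac{|\langle\bmu_j^{(s)},\bmh^{(s)}\rangle|^2}{R_j^{(s)}},\qquad R_j^{(s)} \;:=\; h_{*}^{(s)} - \la_j^{(s)} + \eta_j - \sum_{i\neq j}\frac{|\langle\bmu_i^{(s)},\bmh^{(s)}\rangle|^2}{\la_i^{(s)}-\la_j^{(s)}+\eta_j}.
\end{align}
Since $\bmh^{(s)}$ is independent of $H_{N+K}^{(s)}$ with $\bE|\bmh^{(s)}_k|^2 = 1$, and since $\eta_j$ will be shown to be $O(N^{-1/2})$, the isotropic local law at the spectral edge implies that $R_j^{(s)}$ concentrates on a deterministic quantity of order $\sqrt N$. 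Consequently $\sqrt N \eta_j$ is asymptotically a constant multiple of $|\langle \bmu_j^{(s)},\bmh^{(s)}\rangle|^2$.

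The next step is a joint CLT for the overlap vector. Conditional on $H_{N+K}^{(s)}$, each overlap $\langle \bmu_j^{(s)},\bmh^{(s)}\rangle = \sum_k \overline{(\bmu_j^{(s)})_k}\,\bmh_k^{(s)}$ is a weighted sum of independent centered random variables with variance $1$ and finite fourth moment. The edge eigenvector delocalization bound $\|\bmu_j^{(s)}\|_\infty \leq N^{-1/2+o(1)}$ with overwhelming probability supplies the Lindeberg condition, and for $j = 1, \ldots, \ell$ the orthonormality of $\bmu_1^{(s)}, \ldots, \bmu_\ell^{(s)}$ forces the limiting covariance matrix to be the identity. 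Therefore $(\langle \bmu_j^{(s)},\bmh^{(s)}\rangle)_{j=1}^\ell$ converges to $\ell$ independent standard real (resp.\ complex) Gaussians for $\beta=1$ (resp.\ $\beta=2$), and squaring yields i.i.d.\ variables with the density $C_\beta x^{\beta/2-1}e^{-\beta x/2}$, matching \eqref{e:Gammabeta}. Across levels, the columns $\bmh^{(0)}, \ldots, \bmh^{(K-1)}$ and their diagonals are mutually independent and independent of $H_{N+K}^{(0)}$, which upgrades the convergence to the full joint law over the array.

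The asymptotic independence of \eqref{e:TWbeta} and \eqref{e:Gammabeta} rests on quantum unique ergodicity at the edge: conditional on the edge eigenvalues of $H_{N+K}^{(0)}$, the corresponding eigenvectors $\bmu_j^{(0)}$ are asymptotically uniform on the unit sphere, so the Gaussian limit of $(\langle \bmu_j^{(s)},\bmh^{(s)}\rangle)$ persists under this conditioning, while \eqref{e:TWbeta} is a function only of the edge eigenvalues. Combined with the separation of scales ($N^{-1/6}$ versus $N^{-1/2}$), the two statistics decouple in the limit. The main technical obstacle is to establish the concentration of $R_j^{(s)}$ on its deterministic value with error $o(\sqrt N)$, uniformly in $j \leq \ell$ and $s \leq K-1$, which requires a sharp isotropic local law at the spectral edge at distance $N^{-1/2+\epsilon}$, together with a multivariate Lindeberg CLT for the overlap vector under only a finite fourth moment assumption. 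Both ingredients should be accessible by current Wigner matrix technology, but their coordination with the secular equation expansion to the precision $o(N^{-1/2})$ on the spacings $\eta_j$ is the delicate part.
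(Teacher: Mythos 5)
Your proposal follows essentially the same route as the paper: reduce \eqref{e:TWbeta} to the edge universality theorem of \cite{MR3161313}, and analyze the secular (master) equation of the one-row extension together with a Lindeberg CLT for the overlaps $\langle\bmu_j^{(s)},\bmh^{(s+1)}\rangle$, with delocalization supplying the CLT hypothesis. The secular-equation rearrangement into $\eta_j=|\langle\bmu_j^{(s)},\bmh^{(s+1)}\rangle|^2/R_j^{(s)}$ and the claim that $R_j^{(s)}$ concentrates near $\sqrt N$ is exactly the content of the paper's Claim \ref{c:error}, just formulated in a slightly different way. Two remarks on where your treatment deviates.

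First, the asymptotic independence argument. You invoke quantum unique ergodicity at the edge, but this is both unnecessary and a potentially problematic dependency: QUE at the spectral edge is a much heavier result than what is needed, and it is not obviously available under only the finite-fourth-moment hypothesis of Definition \ref{def:gW}, which is the regime the theorem is designed to cover. The paper avoids this entirely. The cleaner route is to observe that the conditional CLT for $(\langle\bmu_j^{(s)},\bmh^{(s+1)}\rangle)_{j\leq\ell}$ is established \emph{conditionally on the full minor} $H_{N+K}^{(s)}$ whenever $H_{N+K}^{(s)}\in\cH_{N+s}$, with a limit law that does not depend on $H_{N+K}^{(s)}$ at all. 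Since \eqref{e:TWbeta} is a function of $H_{N+K}^{(0)}$, and $\cH_{N+s}$ has probability $1-\oo(1)$, independence follows automatically. There is nothing to prove about the conditional law of the eigenvectors given the eigenvalues; the Gaussian limit holds for \emph{every} delocalized orthonormal frame $(\bmu_j^{(s)})$, so averaging over the (unknown) conditional distribution of the eigenvectors given the eigenvalues does not affect it.

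Second, you appeal to the isotropic local law to pin down $R_j^{(s)}$, and you only assert $\sqrt N\eta_j$ is a "constant multiple" of $|\langle\bmu_j^{(s)},\bmh^{(s+1)}\rangle|^2$. To get the stated Gamma density (with the correct rate $\beta/2$) you must show the constant is exactly $1$, i.e.\ $R_j^{(s)}/\sqrt N\to 1$; this follows from $-\la_j^{(s)}=2\sqrt N+\oo(\sqrt N)$ at the left edge together with $\sum_{i\ne j}(\la_i^{(s)}-\la_j^{(s)})^{-1}=\sqrt N+\oo(\sqrt N)$. The paper establishes this via rigidity, a comparison to $\int\rho_{sc}/(x-z)$, and a second-moment concentration estimate rather than an isotropic local law (the latter is not cited as available under only finite fourth moments, whereas rigidity and delocalization are, by \cite{GNT}). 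This is worth fixing so that the proof is self-contained under the stated assumptions.

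Modulo those two points your outline is sound and coincides with the paper's.
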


Let $\cL$ be the set of discrete point configurations $\{\mu_j\}_{j\in \bZ}$ of $\bR$ (where the labeling is uniquely determined by setting $\mu_{-1}<0\leq \mu_0$) unbounded from above and from below, such that for $x$ going to infinity, and for all $a,b\in \bR$, $|\{j: \mu_j\in [0, x+a]\}|-|\{j:\mu_j\in [-x+b, 0]\}|=\OO(x/\log^2 x)$. The $\sigma$-algebra on $\cL$ is generated by the maps $L\mapsto |L\cap I|$ for all open, bounded intervals $I\subset \bR$. Let $\cal G$ be the set of doubly infinite sequence $\{\gamma_j\}_{j\in \bZ}$ such that for $k$ going to infinity
\begin{align}\label{e:sumcon}
\sum_{0\leq j\leq k}\gamma_j=k+\OO(k/\log^2 k),\quad
\sum_{-k\leq j\leq 0}\gamma_j=k+\OO(k/\log^2 k).
\end{align}
The $\sigma$-algebra on $\cal G$  is generated by the coordinate maps $\gamma_j$ for $j\in \bZ$.

For $\{\mu_j\}_{j\in \bZ}\in \cL$ and $\{\gamma_j\}_{j\in \bZ}\in \cal G$, it is proven in \cite[Theorem 3]{NaVi} the following Stieltjes transform of the weighted measure $\sum_{j\in \bZ}\gamma_j \delta_{\mu_j}$ is well-defined,
\begin{align*}
S_{\sum_{j\in \bZ}\gamma_j \delta_{\mu_j}}(z)
=\sum_{j\in \bZ}\frac{\gamma_j}{\mu_j-z}
=\lim_{x\rightarrow \infty}\sum_{j\in \bZ :\mu_j\in [-x,x]}\frac{\gamma_j}{\mu_j-z}.
\end{align*}
Let $\cD$ be the map from $\cL\times \cG\times \bR$ to $\cL$ , defined by 
\begin{align*}
\cD(\{\mu_j\}_{j\in \bZ}, \{\gamma_j\}_{j\in \bZ}, h)=S^{-1}_{\sum_{j\in \bR}\gamma_j\delta_{\mu_j}}(h).
\end{align*}
The operator $\cD$ is measurable and it is indeed a map to $\cL$. Let $\Pi$ be the probability measure on $\cG\times \bR$ under which the random variables $h$ and $\{\gamma_j\}_{j\in \bZ}$ are independent, $\gamma_j$ follows Gamma distribution with density $C_\beta x^{\beta/2-1}e^{-\beta x/2}$. The operator $\cD$ defines a Markov chain $\{\Theta^{(s)}\}_{s\geq 0}$ on $\cL$,
\begin{align}\label{e:markov}
\Theta^{(s+1)}=\cD(\Theta^{(s)}, \Gamma^{(s)},h), \quad s=0,1,2,\cdots.
\end{align}
where $(\Gamma^{(s)},h)$ is sampled from $\Pi$. It is proven in \cite[Theorem 9]{NaVi}, the Sine$_\beta$ point process is supported on $\cL$, and is an invariant measure for the Markov chain \eqref{e:markov}. The second main result of our paper studies the asymptotic behaviors of the eigenvalues of the minors $H^{(s)}_{N+K}$ for $0\leq s\leq K$ in the bulk of the spectrum.   
\begin{theorem}[Bulk Case]\label{t:bulk}
Fix integer $K\geq 1$, and energy $E\in (-2,2)$. Let $H_{N+K}$ be a Wigner matrix (as in Definition \ref{def:gW}). The point process 
\begin{align*}
\Theta^{(s)}_{N}=\sum_{j=1}^{N+s}\delta_{\sqrt{N(4-E^2)}(\la_j^{(s)}-E\sqrt N)}, \quad 0\leq s\leq K,
\end{align*}
forms a Markov chain, and converges in distribution in the limit $N\rightarrow \infty$ to the Markov chain \eqref{e:markov} for the level
\begin{align*}
h=-\frac{E}{2\sqrt{4-E^2}}.
\end{align*}
\end{theorem}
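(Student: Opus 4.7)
The plan is to analyze the conditional law of $\Theta^{(s+1)}_N$ given $H^{(s)}_{N+K}$ via the secular equation for rank-one extensions, and then pass to the limit using bulk eigenvalue and eigenvector universality. Writing $H^{(s+1)}_{N+K}$ as $H^{(s)}_{N+K}$ extended by the column $v^{(s)} = (h_{1, N+s+1}, \ldots, h_{N+s, N+s+1})^T$ and diagonal entry $a^{(s)} = h_{N+s+1, N+s+1}$, both independent of $H^{(s)}_{N+K}$, the Schur complement formula yields the secular equation
\begin{equation*}
\lambda - a^{(s)} = \sum_{j=1}^{N+s} \frac{|\langle v^{(s)}, \bmu_j^{(s)}\rangle|^2}{\lambda - \lambda_j^{(s)}},
\end{equation*}
whose $N+s+1$ solutions are exactly the eigenvalues $\lambda_k^{(s+1)}$. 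Setting $\gamma_{j,N}^{(s)} = |\langle v^{(s)}, \bmu_j^{(s)}\rangle|^2$ and rescaling via $\tilde\lambda = \sqrt{N(4-E^2)}(\lambda - E\sqrt N)$, the equation becomes
\begin{equation*}
\sum_j \frac{\gamma_{j,N}^{(s)}}{\tilde\lambda - \tilde\lambda_j^{(s)}} = \frac{E\sqrt N - a^{(s)}}{\sqrt{N(4-E^2)}} + \frac{\tilde\lambda}{N(4-E^2)},
\end{equation*}
whose right-hand side tends to $E/\sqrt{4-E^2}$ as $N \to \infty$.

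The central input is joint convergence: $\Theta^{(s)}_N$ converges to the Sine$_\beta$ process $\{\mu_j\}$, and jointly the overlap weights $(\gamma_{j,N}^{(s)})_j$ converge to i.i.d.\ Gamma variables $\gamma_j$ with density $C_\beta x^{\beta/2-1}e^{-\beta x/2}$, independent of $\{\mu_j\}$. In the Hermite $\beta$ (Gaussian) case this is immediate: $v^{(s)}$ is spherically symmetric and independent of $H^{(s)}_{N+K}$, so $(\langle v^{(s)}, \bmu_j^{(s)}\rangle)_j$ is exactly i.i.d.\ Gaussian for every $N$, and the bulk limit of $\Theta^{(s)}_N$ is Sine$_\beta$ by \cite{NaVi}. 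For a general Wigner matrix, bulk universality for $\Theta^{(s)}_N$ combined with eigenvector universality --- via the isotropic local law and eigenvector moment flow or a Green function comparison argument --- identifies the joint law of any finite cluster of local overlaps with the Gaussian/Gamma limit, asymptotically independent of the local eigenvalue process. This transfer is what permits the non-Gaussian case under the finite fourth moment hypothesis.

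With the joint convergence in hand, we pass to the limit in the rescaled secular equation by splitting the conditionally convergent sum into a microscopic part over indices with $|\tilde\lambda_j^{(s)}| \leq \Lambda$ and a macroscopic complement. The microscopic part converges, jointly with $\Theta^{(s)}_N$, to $\sum_{|\mu_j| \leq \Lambda} \gamma_j/(\tilde\lambda - \mu_j)$, which as $\Lambda \to \infty$ matches the symmetric-truncation principal value $-S_{\sum_j \gamma_j \delta_{\mu_j}}(\tilde\lambda)$ of \cite{NaVi}. The macroscopic complement, by the law of large numbers applied to the nearly i.i.d.\ mean-one overlaps together with eigenvalue rigidity, contributes a deterministic constant that may be computed from the rescaled semicircle density and its deviation from the translation-invariant Sine$_\beta$ density $1/(2\pi)$ at intermediate and macroscopic scales. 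Equating the two sides produces the identification $S_{\sum_j \gamma_j \delta_{\mu_j}}(\tilde\lambda^{(s+1)}) = h = -E/(2\sqrt{4-E^2})$, so that in the limit $\Theta^{(s+1)} = \cD(\Theta^{(s)}, \Gamma^{(s)}, h)$. Iterating over $s=0,\ldots,K-1$, using at each step the independence of the fresh row $(v^{(s)}, a^{(s)})$ from $H^{(s)}_{N+K}$, then yields the full joint convergence to the Markov chain \eqref{e:markov}.

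The main technical obstacle is the joint eigenvalue--eigenvector universality controlling the overlap weights $\gamma_{j,N}^{(s)}$ at the same microscopic scale on which $\Theta^{(s)}_N$ converges: one must show that finite clusters of local overlaps are asymptotically i.i.d.\ Gamma and asymptotically independent of the local eigenvalue process, as a joint distributional statement. A secondary delicate point is the precise determination of the macroscopic constant from the finite-$N$ tail of the secular sum; the careful rigidity-based computation at intermediate scales is what pins down the exact value of $h$ in terms of $E$.
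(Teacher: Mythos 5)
Your overall architecture matches the paper: pass to the secular equation for the rank-one extension, rescale, split the secular sum into a microscopic part converging to the truncated Stieltjes transform and a macroscopic tail converging to a deterministic constant, and identify $h=-E/(2\sqrt{4-E^2})$. The formula is right and the split is the right idea.

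The main issue is your treatment of the overlap weights $\gamma_{j,N}^{(s)}=|\langle v^{(s)},\bmu_j^{(s)}\rangle|^2$. You invoke ``eigenvector universality --- via the isotropic local law and eigenvector moment flow or a Green function comparison argument'' as the crucial input, and flag this as the main technical obstacle. This misidentifies where the randomness lives. The fresh column $v^{(s)}=\bmh^{(s+1)}$ is independent of $H^{(s)}_{N+K}$, so \emph{conditionally on} $H^{(s)}_{N+K}$ the eigenvectors $\bmu_j^{(s)}$ are fixed deterministic orthonormal vectors, and each overlap $\langle v^{(s)},\bmu_j^{(s)}\rangle = \sum_\alpha \bmu_{j\alpha}^{(s)} h_\alpha^{(s+1)}$ is a linear statistic of the i.i.d.\ entries of the fresh column. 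The only input needed is \emph{delocalization} of the $\bmu_j^{(s)}$ (which holds with high probability by \eqref{e:delocalize}), and then a Lindeberg-type CLT --- as in the paper's Proposition \ref{p:eigenloc}, via a direct characteristic-function expansion --- gives joint asymptotic Gaussianity of finitely many overlaps, with independence across $j$ coming from orthonormality. No eigenvector moment flow, no Green function comparison, no eigenvector universality is needed, and the asymptotic independence of the weights from the local eigenvalue process is automatic since the computation is conditional on $H^{(s)}_{N+K}$. This is what makes the argument work at the level of finite fourth moment.

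A secondary gap: you do not address how to make the limit of the secular sum rigorous. The paper establishes tightness of the auxiliary random variables $Y_N^{(s)}$, $Z_N^{(s)}$ (controlling, uniformly in $N$, the counting-function and weight-sum fluctuations at polynomially growing scales, via the variance bound of Corollary \ref{c:var}), applies Skorokhod representation to reduce to almost-sure convergence, and then uses the resulting deterministic bounds \eqref{e:prelimbound}--\eqref{e:limbound} to control the tail of the secular sum (estimates \eqref{e:mid}, \eqref{e:tm3}, \eqref{e:tm4}) uniformly in $N$ before letting the truncation $\fj\to\infty$. Your sketch asserts that the macroscopic complement ``contributes a deterministic constant'' by a ``law of large numbers'' and rigidity, but without a uniform-in-$N$ control at all scales between microscopic and macroscopic the exchange of limits $N\to\infty$, $\Lambda\to\infty$ is not justified; the paper's $Y_N$, $Z_N$ machinery is precisely what closes this.
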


%
%
%
%
%

\section{Preliminaries}

In the paper, we write $a_N=\OO(b_N)$ or $a_N\lesssim b_N$ if there exists some universal constant $C$ such that $|a_N|\leq C b_N$. We write $a_N=\oo(b_N)$, or $a_N\ll b_N$ if the ratio $|a_N|/b_N\rightarrow 0$ as $N$ goes to infinity. We denote the set $\{1, 2,\cdots, N\}$ by $\qq{1,N}$. We say an event $\Omega_N$ holds with high probability, if $\bP(\Omega_N)=1-\oo(1)$. We write $a_N=\oo_\bP(b_N)$ if with high probability $a_N=\oo(b_N)$ holds. We write $a_N=\OO_\bP(b_N)$ if with high probability $a_N=\OO(b_N)$ holds.
We denote the semi-circle distribution as 
\begin{align*}
\rho_{sc}(x)=\frac{\sqrt{4-x^2}}{2\pi}, \quad x\in [-2,2], \quad \mu_{sc}[a,b]=\int_a^b \rho_{sc}(x)\rd x.
\end{align*}
We denote $\cM(\bR)$ the space of locally finite measures on the Borel sets of $\bR$, endowed with the topology of \emph{locally} weak convergence, i.e. a sequence of locally finite measures $\{\mu_N\}$ converge to $\mu$ if $\lim_{N\rightarrow\infty}\int f \rd \mu_N = \int f \rd \mu$ for all continuous compactly supported functions $f: \bR\mapsto \bR$.  We recall that  $\cG$ is the set of doubly infinite sequence $\{\gamma_j\}_{j\in \bZ}$ satisfying \eqref{e:sumcon}.
A sequence of doubly infinite sequence $\{\gamma_j^N\}_{j\in \bZ}\in \cG$ converges to $\{\gamma_j\}_{j\in \bZ}$ in distribution, if  $\{\gamma_j^N\}_{j\in \bZ}$ converges to $\{\gamma_j\}_{j\in \bZ}$ coordinatewise in distribution. We remark that with the topology of locally weak convergence, both $\cM(\bR)$ and $\cG$ form polish spaces.

\subsection{Rigidity of eigenvalues and delocalization of eigenvectors}
For Wigner matrices with high probability, the eigenvalues are rigid, i.e. they are close to the classical eigenvalue locations of semi-circle distribution, and eigenvectors are completely delocalized. The rigidity of eigenvalues and delocalization of eigenvectors were first proven in \cite{MR2871147, MR2481753}
under the assumption that the matrix entries have subexponential decay. Later the same statement was proven in \cite[Theorem 1.3]{GNT} under the assumption that the matrix entries have bounded fourth moment. 
\begin{theorem}[{\cite[Theorem 1.3]{GNT}}]\label{t:rigid}
Let $H_{N}$ be a Wigner matrix (as in Definition \ref{def:gW}), with eigenvalues $\la_1\leq \la_2\leq \cdots\leq \la_N$ and normalized eigenvectors $\bmu_1, \bmu_2,\cdots, \bmu_N$. For any $D\geq 0$, with probability at least $1-N^{-D}$ the following holds:
\begin{enumerate}
\item The eigenvalues of $H_{N}$ are rigid with respect to the semi-circle distribution
\begin{align}\label{e:rigid}
\left||\{j: \la_j\in (-\infty, E\sqrt{N}]\}|-N\int_{-\infty}^{E}\rho_{sc}(x)\rd x\right|\leq (\log N)^{\OO(1)}, \quad E\in \bR,
\end{align}
\item The eigenvectors of $H_{N}$ are completely delocalized
\begin{align}\label{e:delocalize}
\|\bmu_i\|_{\infty}\leq \frac{(\log N)^{\OO(1)}}{\sqrt N},  \quad 1\leq i\leq N,
\end{align}
\end{enumerate}
where the implicit constants depend on $D$.
\end{theorem}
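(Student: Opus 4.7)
The plan is to use the Schur complement / secular equation at each step of the Markov chain, rescale appropriately, and show that the resulting equation converges to the NaVi secular equation $S(\nu) = h$ defining the operator $\cD$.

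\textbf{Step 1 (Secular equation).} Decompose $H^{(s+1)}_{N+K} = \begin{pmatrix} H^{(s)}_{N+K} & a_{s+1} \\ a_{s+1}^* & h_{s+1}\end{pmatrix}$, where $a_{s+1}\in\bC^{N+s}$ is the new column of off-diagonal entries and $h_{s+1}$ the new diagonal entry. By Definition \ref{def:gW}, $(a_{s+1}, h_{s+1})$ is independent of $H^{(s)}_{N+K}$ and of $(a_{s'}, h_{s'})$ for $s'\ne s$. Expanding $\det(H^{(s+1)}_{N+K} - \mu)$ in the eigenbasis of $H^{(s)}_{N+K}$, the eigenvalues $\lambda^{(s+1)}_j$ are precisely the solutions of
\begin{equation*}
h_{s+1}-\mu \;=\; \sum_{k=1}^{N+s}\frac{\xi_k^{(s)}}{\lambda_k^{(s)}-\mu},\qquad \xi_k^{(s)}:=|\langle\bmu_k^{(s)},a_{s+1}\rangle|^2 .
\end{equation*}

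\textbf{Step 2 (Rescaling and splitting).} Introduce $\tilde\lambda_k^{(s)}=\sqrt{N(4-E^2)}(\lambda_k^{(s)}-E\sqrt N)$ and $\nu=\sqrt{N(4-E^2)}(\mu-E\sqrt N)$. Dividing the secular equation through by $\sqrt{N(4-E^2)}$ produces
\begin{equation*}
\frac{h_{s+1}-E\sqrt N}{\sqrt{N(4-E^2)}}-\OO(1/N)\;=\;\sum_{k}\frac{\xi_k^{(s)}}{\tilde\lambda_k^{(s)}-\nu}.
\end{equation*}
The left-hand side converges deterministically to $-E/\sqrt{4-E^2}$. I split the right-hand side into a symmetric local sum (over $|\tilde\lambda_k^{(s)}-\nu|\leq R_N$ for an intermediate scale $1\ll R_N\ll N$) and a far/bulk remainder. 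Using rigidity \eqref{e:rigid} and a standard semicircle-Stieltjes calculation, the bulk remainder concentrates around the principal value $\int\rho_{sc}(x)\d x/(\sqrt{4-E^2}(x-E)) = -E/(2\sqrt{4-E^2})$ with error $o(1)$ uniform in $\nu$ on compact sets. Equating the two sides isolates the local sum:
\begin{equation*}
-\frac{E}{2\sqrt{4-E^2}}+o_\bP(1) \;=\; \sum_{|\tilde\lambda_k^{(s)}-\nu|\leq R_N}\frac{\xi_k^{(s)}}{\tilde\lambda_k^{(s)}-\nu},
\end{equation*}
which is precisely the NaVi equation $S_{\sum\gamma_j\delta_{\mu_j}}(\nu)=h$ with $h=-E/(2\sqrt{4-E^2})$.

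\textbf{Step 3 (Joint convergence of local data).} I need the joint law of $\bigl(\{\tilde\lambda_k^{(s)}\},\{\xi_k^{(s)}\},h_{s+1}\bigr)_{0\leq s\leq K-1}$ restricted to a local window to converge to independent copies of (Sine$_\beta$ point process, i.i.d.~Gamma weights, deterministic level). Bulk universality for Wigner matrices (\cite{MR2661171,MR3306005,MR3372074}, in particular the eigenvalue gap and correlation-function universality) provides convergence of $\sum_k\delta_{\tilde\lambda_k^{(s)}}$ to Sine$_\beta$. Conditional on $H^{(s)}_{N+K}$, $a_{s+1}$ has i.i.d.~entries with bounded fourth moment, and the delocalization bound \eqref{e:delocalize} ensures every $\bmu_k^{(s)}$ satisfies the Lindeberg condition; hence a multidimensional CLT gives $(\langle\bmu_{k_1}^{(s)},a_{s+1}\rangle,\dots,\langle\bmu_{k_\ell}^{(s)},a_{s+1}\rangle)$ converging to a real/complex Gaussian with identity covariance (by orthonormality), so the weights $(\xi_{k_1}^{(s)},\dots,\xi_{k_\ell}^{(s)})$ converge to i.i.d.~random variables with density $C_\beta x^{\beta/2-1}e^{-\beta x/2}$ and are asymptotically independent of the spectrum of $H^{(s)}_{N+K}$. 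Independence of $a_{s+1},h_{s+1}$ across $s$ is built into the Wigner structure. Finally, the $\cL$-counting and $\cG$-sum conditions \eqref{e:sumcon} follow from rigidity and a concentration inequality for $\sum_{k}(\xi_k^{(s)}-1)$ over moderate windows.

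\textbf{Step 4 (Markov-chain limit).} Assembling the previous steps, the joint law $(\Theta^{(s)}_N,\Gamma^{(s)}_N,h^{(s+1)}_N)\to(\Theta^{(s)},\Gamma^{(s)},h)$ in $\cM(\bR)\times\cG\times\bR$ with $\Gamma^{(s)}$ independent i.i.d.~Gamma and $h=-E/(2\sqrt{4-E^2})$. By the measurability of $\cD$ established in \cite{NaVi}, together with the convergence of the rescaled secular equation to its NaVi limit, $\Theta^{(s+1)}_N = \cD(\Theta^{(s)}_N,\Gamma^{(s)}_N,h^{(s+1)}_N)+o_\bP(1)$, and one iterates to obtain joint convergence $(\Theta^{(0)}_N,\dots,\Theta^{(K)}_N)\to(\Theta^{(0)},\dots,\Theta^{(K)})$.

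\textbf{Main obstacle.} The genuine difficulty lies in Step 2: the rescaled Stieltjes sum is only conditionally convergent, so one must match scales delicately. Concretely, one needs a quantitative separation of the deterministic semicircular ``bulk" contribution (down to a $o(1)$ error uniformly in $\nu$ on compact sets) from the local Sine$_\beta$-driven fluctuations, on an intermediate window $R_N$ chosen so that (i) the far-field eigenvalues already behave deterministically via rigidity, yet (ii) the symmetric counting asymmetry of the rescaled semicircle (which is $\OO(R_N^2/N)$) is controlled within the tolerance imposed by the $\cL$-condition. Handling this uniformly, together with making Step 3's CLT quantitative enough to be coupled with the local Sine$_\beta$ convergence, is the principal technical effort; everything else follows from the machinery already recorded in Theorem \ref{t:rigid} and from \cite{NaVi}.
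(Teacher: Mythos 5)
Your proposal does not prove the statement it was assigned. The statement is Theorem \ref{t:rigid}: eigenvalue rigidity \eqref{e:rigid} and eigenvector delocalization \eqref{e:delocalize} for a Wigner matrix with finite fourth moments. What you have written is instead a proof sketch of Theorem \ref{t:bulk} (the bulk scaling limit of the corner process via the secular equation and the Najnudel--Vir\'ag bead process). Worse, your argument is circular with respect to the assigned statement: in Steps 2 and 3 you explicitly invoke \eqref{e:rigid} to control the far-field Stieltjes sum and \eqref{e:delocalize} to verify the Lindeberg condition for the weights $\xi_k^{(s)}$, and you close by saying that everything ``follows from the machinery already recorded in Theorem \ref{t:rigid}.'' You cannot use the conclusion of the theorem as an input to its own proof.

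For the record, the paper does not prove Theorem \ref{t:rigid} at all; it is imported verbatim from \cite[Theorem 1.3]{GNT}. A genuine proof would have nothing to do with minors or the corner process. The standard route is a local semicircle law: one shows that the Stieltjes transform $m_N(z)=\frac{1}{N}\Tr(H_N/\sqrt N-z)^{-1}$ and the diagonal resolvent entries are close to $m_{sc}(z)$ down to spectral scales $\Im z\sim N^{-1+\epsilon}$, via the self-consistent (Schur complement) equation for $G_{ii}$ together with large-deviation bounds for the quadratic forms appearing there. Rigidity then follows by integrating the imaginary part of the Stieltjes transform (Helffer--Sj\"ostrand), and delocalization from the elementary inequality $|\bmu_i(\alpha)|^2\leq \eta\,\Im G_{\alpha\alpha}(\la_i/\sqrt N+\ri\eta)$ at $\eta\sim N^{-1+\epsilon}$. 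The specific difficulty addressed in \cite{GNT} is carrying this out under only a fourth-moment assumption (rather than subexponential tails), which requires truncation and resampling of the large entries. If your task was in fact to prove Theorem \ref{t:bulk}, your outline is broadly aligned with the paper's Section 4, but it would still need the tightness control on the weighted sums (the $Y_N^{(s)},Z_N^{(s)}$ estimates via Corollary \ref{c:var}) that the paper uses to justify the conditionally convergent limit in your Step 2.
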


\subsection{Master equation}
Given an $(n+1)\times (n+1)$ matrix $A_{n+1}$, we denote its top-left $n\times n$ minor by $A_n$. The eigenvalues of $A_{n+1}$ satisfy a master equation in terms of the eigenvalues and eigenvectors of $A_n$.  We denote the eigenvalues of $A_n$ as $\la_1\leq \la_2\leq \cdots\leq \la_n$, with corresponding normalized eigenvectors $\bmu_1,\bmu_2,\cdots, \bmu_n$. Then $A_n$ has spectral decomposition $A_n=U \Lambda U^*$, where $\Lambda=\diag\{\la_1, \la_2, \cdots, \la_n\}$, and $U=[\bmu_1, \bmu_2,\cdots,\bmu_n]$. We can write $A_n$ as a $2\times 2$ block matrix
\begin{align*}
A_{n+1}=\left[
\begin{array}{cc}
U\Lambda U^*, &\bm g\\
\bm g^*, &g_{n+1}
\end{array}
\right], \quad \bm g=(g_1, g_2, \cdots, g_n)^t.
\end{align*}
The determinant formula for block matrices gives
\begin{align*}
\det (A_{n+1}-z)
=\det(U\Lambda U^*-z)\det\left(g_{n+1}-z-\bm g^* (U\Lambda U^*-z)^{-1}\bm g\right ).
\end{align*}
Eigenvalues of $A_{n+1}$ are zeros of the meromorphic function
\begin{align}\label{e:meq}
g_{n+1}-z-\bm g^* (U\Lambda U^*-z)^{-1}\bm g
=g_{n+1}-z-\sum_{i=1}^n \frac{|(U^*\bm g)_i|^2}{\la_i-z}
=g_{n+1}-z-\sum_{i=1}^n \frac{|\langle\bmu_i,\bm g\rangle|^2}{\la_i-z}.
\end{align}
The expression \eqref{e:meq} will be used repeatedly in the rest of this paper to get the estimates of eigenvalues of $A_{n+1}$ conditioning on the submatrix $A_n$.

\section{Edge Case}

The first part  \eqref{e:TWbeta} of Theorem \ref{t:edge} is the edge universality statement. Under the assumption that matrix entries have bounded fourth moment, edge universality was proven in \cite[Theorem 1.2]{MR3161313}. In fact, edge universality requires slightly weaker assumption, i.e. $\lim_{s\rightarrow \infty}s^4 \bP(|h_{ij}|>s)=0$.

\begin{theorem}[Edge Universality,{\cite[Theorem 1.2]{MR3161313}}]\label{t:edgeU}
Let $H_{N}$ be a Wigner matrix (as in Definition \ref{def:gW}) with eigenvalues $\la_1\leq \la_2\leq\cdots\leq \la_N$. Its rescaled extreme eigenvalues  
\begin{align*}
N^{1/6}(\la_1+2\sqrt{N}),\; N^{1/6}(\la_2+2\sqrt{N}),\;\cdots\;, N^{1/6}(\la_\ell+2\sqrt{N}),
\end{align*}
converge in distribution in the limit $N\rightarrow \infty$ to the Tracy-Widom$_\beta$ distribution. The same statement holds for the right spectral edge.
\end{theorem}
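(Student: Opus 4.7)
The plan is to establish the Tracy--Widom limit for the extreme eigenvalues of a general Wigner matrix by comparison with the Gaussian ensembles (GOE for $\beta=1$, GUE for $\beta=2$), for which Tracy and Widom \cite{MR1257246, MR1385083} directly computed the limiting joint distribution. The comparison proceeds via truncation followed by a Green function comparison argument at the spectral edge.

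First, I would replace $H_N$ by a truncated matrix $\tilde H_N$ whose entries are obtained from $h_{ij}\1_{|h_{ij}|\leq N^\phi}$ by recentering and rescaling to have mean zero and variance one, for a suitably small parameter $\phi>1/4$. By Markov's inequality and the fourth moment bound,
\begin{align*}
\bP(H_N\neq \tilde H_N)\leq \sum_{i\leq j}\bP(|h_{ij}|>N^\phi)\leq N^{2-4\phi}\max_{ij}\bE|h_{ij}|^4=\oo(1),
\end{align*}
and the induced mean/variance shifts produce only a deterministic perturbation of eigenvalues of size negligible compared with the edge scale $N^{-1/6}$. Moreover, by dominated convergence, $\bE|h_{ij}|^4\1_{|h_{ij}|>N^\phi}\to 0$, so the fourth moments of $\tilde h_{ij}$ converge to those of $h_{ij}$. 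After this truncation, all entries are uniformly bounded by $N^\phi$, and the local semicircle law together with the rigidity statement of Theorem~\ref{t:rigid} applies on optimal scales.

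Second, I would apply the Green function comparison theorem at the edge to $\tilde H_N$, following the Lindeberg swap strategy of \cite{MR2871147,MR2669449}. The joint law of $(N^{1/6}(\la_k+2\sqrt N))_{k=1}^\ell$ can be expressed as a smooth functional of the resolvent traces $\Tr(\tilde H_N/\sqrt N-z)^{-1}$ evaluated near the edge with imaginary part $\eta\sim N^{-2/3+\eps}$. Replacing the entries of $\tilde H_N$ one at a time by independent Gaussian variables with matched mean and variance, a Taylor expansion of the resolvent together with the cumulant expansion produces, after summing the $\OO(N^2)$ swaps, a telescoping error dominated by the contribution of the fourth cumulant. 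The conclusion is that the extreme eigenvalue statistics of $\tilde H_N$ asymptotically coincide with those of GOE/GUE.

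The main obstacle is controlling this fourth cumulant contribution, since a direct matching of four moments is unavailable under only the boundedness of $\bE|h_{ij}|^4$. The resolution, as in \cite{MR3161313}, is to split the fourth-cumulant contribution of each entry into a bulk part (involving $\bE|h_{ij}|^4\1_{|h_{ij}|\leq N^\phi}$) and a tail part (involving $\bE|h_{ij}|^4\1_{|h_{ij}|>N^\phi}$). The tail part vanishes by dominated convergence, as noted above. The bulk part, while of the same order as for the Gaussian ensemble, is shown to produce only a deterministic shift of extreme eigenvalues that is negligible on the scale $N^{-1/6}$, via a careful cumulant expansion exploiting cancellations specific to the edge regime together with the delocalization bound \eqref{e:delocalize}. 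Combined with the known Tracy--Widom law for GOE/GUE, this establishes the convergence at the left spectral edge; the right edge follows by replacing $H_N$ by $-H_N$.
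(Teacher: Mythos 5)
The paper does not actually prove this statement: Theorem \ref{t:edgeU} is imported verbatim from \cite[Theorem 1.2]{MR3161313}, and the surrounding text only remarks that finite fourth moment is essentially the optimal hypothesis. So there is no internal proof to compare against; what you have written is a sketch of the proof of the cited external result, and your sketch does follow the route of that reference (truncation, then an edge Green function comparison against GOE/GUE in which the fourth-cumulant contribution is controlled rather than matched, plus $H\mapsto -H$ for the right edge).

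Within your sketch, however, one step fails as written: the truncation threshold. From $\bP(|h_{ij}|>N^\phi)\leq N^{-4\phi}\,\bE|h_{ij}|^4$ and a union bound over $\OO(N^2)$ entries you get $\bP(H_N\neq\tilde H_N)\lesssim N^{2-4\phi}$, which is $\oo(1)$ only for $\phi>1/2$; for $\phi$ just above $1/4$ your displayed bound is of order $N$, not $\oo(1)$. The correct choice is to truncate at essentially $N^{1/2}$, using the refined tail condition $\lim_{s\to\infty}s^4\,\bP(|h_{ij}|>s)=0$ to make $N^2\,\bP(|h_{ij}|>N^{1/2})=\oo(1)$. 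This is not a cosmetic fix: the whole tension of the problem is that the comparison argument wants a small truncation level while the union bound forces a large one, and once the entries are only bounded by $N^{1/2}$ the assertions that ``the local semicircle law \ldots applies on optimal scales'' and that the Lindeberg swap errors telescope to $\oo(1)$ constitute precisely the hard technical content of \cite{MR3161313}, which your proposal defers entirely to the citation. As a self-contained argument the proposal therefore has a gap at its central step; as a summary of why the quoted theorem holds, it identifies the right ingredients (the mean/variance recentring estimates and the reduction to the Gaussian case are otherwise sound).
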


We denote the set $\cH_N$ of real symmetric or complex Hermitian $N\times N$ matrices, such that the following holds
\begin{enumerate}
\item The eigenvalues of $H_{N}$ are rigid with respect to the semi-circle distribution, i.e. \eqref{e:rigid} holds.
\item The eigenvectors of $H_{N}$ are completely delocalized, i.e. \eqref{e:delocalize} holds.
\item The extreme eigenvalues satisfy: $\la_1=-2\sqrt N +\OO((\log N)^{\OO(1)}N^{-1/6})$, and $\la_{i+1}-\la_i\gtrsim (\log N)^{\OO(1)}N^{-1/6}$, for $1\leq i\leq \ell-1$.
\end{enumerate}
It follows from Theorems \ref{t:rigid} and \ref{t:edgeU}, the event $\cH_N$ holds with high probability, i.e. $\bP(H_N\in \cH_N)=1-\oo(1)$. The second part \eqref{e:Gammabeta} of Theorem \ref{t:edge} follows from the following Proposition.

\begin{proposition}\label{p:eigenloc}
Fix integers $K,\ell\geq 1$. Let $H_{N+K}$ be a Wigner matrix (as in Definition \ref{def:gW}).  For any $0\leq s\leq K-1$, conditioning on $H^{(s)}_{N+K}\in \cH_{N+s}$, 
with respect to the randomness of $\bmh^{(s+1)}=(h_{1 N+s+1}, h_{2 N+s+1}, \cdots, h_{N+s N+s+1})^\top$ and $h_{N+s+1N+s+1}$,
\begin{align}\label{e:approx}
\sqrt{N}(\la_i^{(s)}-\la_i^{(s+1)})= \left|\langle \bmu_i^{(s)}, \bmh^{(s+1)}\rangle\right|^2+\oo_\bP(1), \quad 1\leq i\leq \ell.
\end{align}
And the random vector 
\begin{align}\label{e:Gamma}
\left(\left|\langle \bmu_1^{(s)}, \bmh^{(s+1)} \rangle\right|^2, \left|\langle \bmu_2^{(s)}, \bmh^{(s+1)} \rangle\right|^2,\cdots, \left|\langle \bmu_\ell^{(s)}, \bmh^{(s+1)} \rangle\right|^2\right)
\end{align}
converges in distribution in the limit $N\rightarrow \infty$ to a random array with independent Gamma distributed entries, which has density
$
C_\beta x^{\beta/2-1}e^{-\beta x/2 }.
$
\end{proposition}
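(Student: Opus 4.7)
The plan is to apply the master equation \eqref{e:meq} to the minor relation. Taking $A_{n+1} = H^{(s+1)}_{N+K}$ and $A_n = H^{(s)}_{N+K}$, and conditioning on $H^{(s)}_{N+K}\in \cH_{N+s}$, the eigenvalues $\la_i^{(s+1)}$ are precisely the zeros of
\begin{equation*}
f(z) \;:=\; h_{N+s+1,N+s+1} - z - \sum_{j=1}^{N+s}\frac{|X_j|^2}{\la_j^{(s)}-z}, \qquad X_j := \langle \bmu_j^{(s)}, \bmh^{(s+1)}\rangle.
\end{equation*}
Interlacing forces $\la_i^{(s+1)}\in [\la_{i-1}^{(s)}, \la_i^{(s)}]$, and the dominant singularity of $f$ near $\la_i^{(s)}$ is the pole $-|X_i|^2/(\la_i^{(s)}-z)$. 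Evaluating $f(\la_i^{(s+1)})=0$ and isolating this term yields the identity
\begin{equation*}
\la_i^{(s)}-\la_i^{(s+1)} \;=\; \frac{|X_i|^2}{A_i}, \qquad A_i \;:=\; h_{N+s+1,N+s+1} - \la_i^{(s+1)} - \sum_{j\neq i}\frac{|X_j|^2}{\la_j^{(s)}-\la_i^{(s+1)}}.
\end{equation*}

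The main step is to show $A_i = \sqrt{N+s}(1+\oo_\bP(1))$. The event $\cH_{N+s}$ gives $\la_i^{(s+1)} = -2\sqrt{N+s}+\OO((\log N)^{\OO(1)}N^{-1/6})$, so $-\la_i^{(s+1)}=2\sqrt{N+s}+\oo(\sqrt N)$, and $h_{N+s+1,N+s+1}=\OO_\bP(1)$ is negligible. Delocalization $\|\bmu_j^{(s)}\|_\infty\leq (\log N)^{\OO(1)}/\sqrt{N}$ together with the finite fourth moment hypothesis (via a standard quadratic-form concentration for $\bmh^* M \bmh$) gives
\begin{equation*}
\sum_{j\neq i}\frac{|X_j|^2}{\la_j^{(s)}-\la_i^{(s+1)}} \;=\; \sum_{j\neq i}\frac{1}{\la_j^{(s)}-\la_i^{(s+1)}} + \oo_\bP(\sqrt N),
\end{equation*}
and rigidity \eqref{e:rigid} identifies the deterministic sum as $(N+s)m_{sc}(\la_i^{(s+1)}/\sqrt{N+s}) + \oo(\sqrt{N}) = \sqrt{N+s} + \oo(\sqrt{N})$, using the elementary identity $m_{sc}(-2)=1$. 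This produces \eqref{e:approx}.

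For the joint law \eqref{e:Gamma}, orthonormality of the $\bmu_j^{(s)}$ gives $\bE X_i \overline{X_j}=\delta_{ij}$ (with the appropriate $\beta$-convention on real/imaginary parts), and delocalization is exactly the Lindeberg-type condition needed to run the multivariate CLT on $(X_1,\dots,X_\ell)$, producing a joint Gaussian limit $\mathcal N(0,I_\ell)$. Squaring coordinatewise, for $\beta=1$ each $|X_i|^2$ is a $\chi^2_1$ with density $C_1 x^{-1/2}e^{-x/2}$, and for $\beta=2$ each $|X_i|^2$ is a standard exponential with density $C_2 e^{-x}$; in both cases this is $C_\beta x^{\beta/2-1}e^{-\beta x/2}$, and the components are independent by joint Gaussianity and vanishing correlations.

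The hard part will be the quadratic-form concentration at the edge. The rigidity guarantee on $\cH_{N+s}$ only produces edge spacings $\gtrsim (\log N)^{\OO(1)} N^{-1/6}$, which are much larger than the relevant perturbation scale $N^{-1/2}$ of $\la_i^{(s+1)}$, so the sum's dependence on $\la_i^{(s+1)}$ is controllable by a mean value estimate. However the terms with $j$ close to $i$ have denominators as small as $N^{-1/6}$, so the error $|X_j|^2-1$ must genuinely be controlled by concentration rather than trivial bounds. I would therefore split the sum into a bounded number of indices $j$ near $i$ (treated by delocalization and the pointwise bound $|X_j|^2\leq (\log N)^{\OO(1)}$ coming from $\|\bmu_j^{(s)}\|_\infty$ and the tail of $\bmh^{(s+1)}$) and the remaining indices (treated by the Hanson--Wright-type concentration of $\sum_j a_j(|X_j|^2-1)$ with $a_j := (\la_j^{(s)}-\la_i^{(s+1)})^{-1}$, whose $\ell^2$ norm is $\oo(N)$ by rigidity). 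Assembling the two estimates gives the needed $\oo_\bP(\sqrt{N})$ bound uniformly in $i\in\qq{1,\ell}$, and completes the proof.
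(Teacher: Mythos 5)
Your overall strategy is the same as the paper's: apply the master equation \eqref{e:meq}, show via rigidity and delocalization that the ``effective slope'' near $\la_i^{(s)}$ is $\sqrt{N}(1+\oo_\bP(1))$, and prove the joint Gaussian limit of $\langle\bmu_j^{(s)},\bmh^{(s+1)}\rangle$ by a Lyapunov/Lindeberg argument to obtain the Gamma law. The CLT half of your argument matches the paper's characteristic-function computation and is fine.

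There is, however, a circularity you do not fully resolve in the half proving \eqref{e:approx}. Your exact identity $\la_i^{(s)}-\la_i^{(s+1)} = |X_i|^2/A_i$ defines $A_i$ in terms of the \emph{unknown} location $\la_i^{(s+1)}$, and proving $A_i = \sqrt{N}(1+\oo_\bP(1))$ requires a lower bound on $|\la_j^{(s)}-\la_i^{(s+1)}|$ for $j$ near $i$. You implicitly claim these denominators are $\gtrsim N^{-1/6}$, but a priori $\la_i^{(s+1)}$ could sit arbitrarily close to $\la_{i-1}^{(s)}$, in which case the $j=i-1$ term of $A_i$ blows up. The conclusion $\la_i^{(s)}-\la_i^{(s+1)} = \OO_\bP(N^{-1/2}) \ll N^{-1/6}$ is exactly what rules this out, so you are assuming what you want to prove. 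The paper avoids this by proving the error bound uniformly for $z$ ranging over the interval $(\la_{i-1}^{(s)},\la_i^{(s)})$ (keeping \emph{both} the $j=i-1$ and $j=i$ poles in the main term), then using the strict monotonicity of $f$ to sandwich the zero between two deterministic points $z_1 < \la_i^{(s+1)} < z_2$. Your write-up needs an analogous uniform-in-$z$ step (e.g.\ show $A_i(z)=\sqrt{N}(1+\oo_\bP(1))$ for all $z\in[\la_i^{(s)}-N^{-1/4},\la_i^{(s)})$, then intersect with the curve $|X_i|^2/(\la_i^{(s)}-z)$).

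A smaller issue: you invoke ``Hanson--Wright-type concentration'' for $\sum_j a_j(|X_j|^2-1)$. Hanson--Wright presupposes subgaussian entries; under the standing finite-fourth-moment hypothesis it does not apply. What you actually need is only a variance bound and Chebyshev, which is exactly what the paper's second-moment computation (followed by Markov's inequality) delivers, using $\bE[(|X_j|^2-1)(|X_{j'}|^2-1)]=(2/\beta)\delta_{jj'}+\OO\big(\sum_\alpha|\bmu^{(s)}_{j\alpha}|^2|\bmu^{(s)}_{j'\alpha}|^2\big)$ together with delocalization. Replace the Hanson--Wright reference with this second-moment argument and the concentration step goes through under the stated hypotheses.
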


\begin{proof}
We first prove \eqref{e:Gamma}. For the real symmetric case,  we have
\begin{align*}\begin{split}
&\phantom{{}={}}\bE\left[\exp\left\{\sum_{j=1}^{\ell}\ri t_j\langle\bmu_j^{(s)}, \bmh^{(s+1)}\rangle\right\}\right]
=\bE\left[\exp\left\{\ri\left\langle\sum_{j=1}^{\ell} t_j\bmu_j^{(s)}, \bmh^{(s+1)}\right\rangle\right\}\right]\\
&=\prod_{\alpha=1}^{N+s}\bE\left[1+\ri\sum_{j=1}^{\ell} t_j\bmu_{j\alpha}^{(s)}\bmh_\alpha^{(s+1)}-\frac{1}{2}\left(\sum_{j=1}^{\ell} t_j\bmu_{j\alpha}^{(s)} \bmh_\alpha^{(s+1)}\right)^2+\OO\left(\left|\sum_{j=1}^{\ell} t_j\bmu_{j\alpha}^{(s)} \bmh_\alpha^{(s+1)}\right|^3\right)\right]\\
&=\prod_{\alpha=1}^{N+s}\bE\left(1-\frac{1}{2}\sum_{j=1}^{\ell} \left(t_j\bmu_{j\alpha}^{(s)}\right)^2+\OO\left(\max_{1\leq j\leq \ell} |t_j|^3\frac{(\log N)^{\OO(1)}\ell^3}{N^{3/2}}\right)\right)\\
&=\exp\left\{-\frac{1}{2}\sum_{j=1}^\ell t_j^2+\OO\left(\max_{1\leq j\leq \ell} |t_j|^3\frac{(\log N)^{\OO(1)}\ell^3}{N^{1/2}}\right)\right\},
\end{split}\end{align*}
where in the third line we used that $H_{N+K}^{(s)}\in \cH_{N+s}$, and its eigenvectors are delocalized. Therefore, $\langle\bmu_1^{(s)}, \bmh^{(s+1)} \rangle, \langle \bmu_2^{(s)}, \bmh^{(s+1)} \rangle,\cdots, \langle \bmu_\ell^{(s)}, \bmh^{(s+1)} \rangle$ converges to independent real gaussian random variables, as long as $\ell\ll N^{1/6}$. The claim \eqref{e:Gamma} follows by noticing that the square of a standard gaussian random variable has Gamma distribution, with density $x^{-1/2}e^{-x/2}/2\sqrt{2\pi}$.

Similarly, for the complex Hermitian case, the vector $\langle\bmu_1^{(s)}, \bmh^{(s+1)} \rangle, \langle \bmu_2^{(s)}, \bmh^{(s+1)} \rangle,\cdots, \langle \bmu_\ell^{(s)}, \bmh^{(s+1)} \rangle$ converges to independent complex gaussian random variables, with independent real and imaginary part, each has variance $1/2$. The claim \eqref{e:Gamma} for the complex Hermitian case follows by noticing that the norm square of a complex gaussian random variable has Gamma distribution, with density $e^{-x}$.

In the following we prove \eqref{e:approx}.
By taking $A_{n+1}=H_{N+K}^{(s+1)}$ in the master equation \eqref{e:meq}, then $A_n=H_{N+K}^{(s)}$ and the eigenvalues of $H_{N+K}^{(s+1)}$ are zeros of the meromorphic function 
\begin{align}\label{e:eqroot}
f(z)=h_{N+s+1 N+s+1}-z-\sum_{j=1}^{N+s} \frac{|\langle \bmu_j^{(s)},\bm h^{(s+1)}\rangle|^2}{\la_j^{(s)}-z}.
\end{align} 
More explicitly, $\lambda_i^{(s+1)}$ is the zero of \eqref{e:eqroot} between $\lambda_{i-1}^{(s)}$ and $\lambda_{i}^{(s)}$ (we make the convention that $\lambda_0^{(s)}=-\infty$ and $\la_{N+s+1}^{(s)}=+\infty$).  On the interval $(\la_{i-1}^{(s)}, \la_i^{(s)})$, $f(z)$ is monotone decreasing. To locate the eigenvalue $\la_{i}^{(s+1)}$, we rewrite $f(z)$ as
\begin{align*}
f(z)=\sqrt{N}-\left( \frac{|\langle \bmu_{i-1}^{(s)},\bm h^{(s+1)}\rangle|^2}{\la_{i-1}^{(s)}-z}+ \frac{|\langle \bmu_i^{(s)},\bm h^{(s+1)}\rangle|^2}{\la_i^{(s)}-z}\right)+\cE(z),
\end{align*}
where 
\begin{align}\label{e:error}
\cE(z)=h_{N+s+1N+s+1}-(z+2\sqrt{N})-\left(\sum_{1\leq j\leq N+s\atop j\neq i-1,i}\frac{|\langle \bmu_j^{(s)},\bm h^{(s+1)}\rangle|^2}{\la_j^{(s)}-z}-\sqrt{N}\right)
\end{align}
Using the rigidity of the eigenvalues and delocalization of the eigenvectors as input, we show that with high probability the error term $\cE(z)$ is negligible. We postpone its proof to the end of this section.
\begin{claim}\label{c:error}
Under the assumptions of Proposition \ref{p:eigenloc}, for any $z\in (\la_{i-1}^{(s)}, \la_i^{(s)})$ we have,
\begin{align}\label{e:error}
\bP\left(|\cE(z)|\geq (\log N)^{\OO(1)}N^{1/6}\right)\lesssim (\log N)^{-\OO(1)}.
\end{align}
\end{claim}
As a consequence of Claim \ref{c:error}, any fixed $z\in (\la_{i-1}^{(s)}, \la_{i}^{(s)})$, with high probability
\begin{align*}
f(z)=\tilde f(z)+\OO\left((\log N)^{\OO(1)}N^{1/6}\right),\quad
\tilde f(z)=\sqrt{N}-\left( \frac{|\langle \bmu_{i-1}^{(s)},\bm h^{(s+1)}\rangle|^2}{\la_{i-1}^{(s)}-z}+ \frac{|\langle \bmu_i^{(s)},\bm h^{(s+1)}\rangle|^2}{\la_i^{(s)}-z}\right)
\end{align*}
Thanks to the assumption that $H_{N+K}^{(s)}$ and \eqref{e:Gamma}, with probability $1-\oo(1)$, it holds
\begin{align}\label{e:gapb}
\la_i^{(s)}-\la_{i-1}^{(s)}\gtrsim (\log N)^{\OO(1)}N^{-1/6},
\end{align}
and
\begin{align}\label{e:uhbound}
(\log N)^{-\OO(1)}\leq |\langle \bmu_{i-1}^{(s)},\bm h^{(s+1)}\rangle|^2, |\langle \bmu_i^{(s)},\bm h^{(s+1)}\rangle|^2\leq (\log N)^{\OO(1)}.
\end{align}
On the event that \eqref{e:gapb}, \eqref{e:uhbound} hold, $\tilde f(z)$ is monotone decreasing from $+\infty$ (or $\sqrt{N}$ if $i=1$) to $-\infty$, on the interval $z\in (\la_{i-1}^{(s)}, \la_{i}^{(s)})$. Thus, on the event that \eqref{e:gapb}, \eqref{e:uhbound} hold,  there exists $z_1,z_2=\la_i^{(s)}-|\langle \bmu_i^{(s)}, \bmh^{(s+1)}\rangle|^2/\sqrt{N}+\OO((\log N)^{\OO(1)}N^{-5/6})$, such that 
\begin{align*}
\tilde f(z_1)\gg (\log N)^{\OO(1)}N^{1/6}, \quad \tilde f(z_2)\ll -(\log N)^{\OO(1)}N^{1/6}.
\end{align*}
Combining the above estimates with Claim \ref{c:error}, with high probability we have
\begin{align*}
f(z_1)=\tilde f(z_1)+\cE(z_1)>0, \quad f(z_2)=\tilde f(z_2)+\cE(z_2)<0.
\end{align*} 
Thanks to the monotonicity of $f(z)$, we conclude that with high probability $z_1< \la_i^{(s+1)}<z_2$, and\begin{align*}
\la_i^{(s+1)}=\la_i^{(s)}-\frac{|\langle \bmu_i^{(s)}, \bmh^{(s+1)}\rangle|^2}{\sqrt{N}}+\OO((\log N)^{\OO(1)}N^{-5/6}).
\end{align*}
The claim \eqref{e:approx} follows.
\end{proof}

\begin{proof}[Proof of Claim \ref{c:error}]
Under the assumptions of Proposition \ref{p:eigenloc} that $H_{N+K}^{(s)}\in \cH_{N+s}$, for any $z\in (\la_{i-1}^{(s)}, \la_i^{(s)})$ we have,
\begin{align*}
\bP(|h_{N+s+1N+s+1}|\geq (\log N)^{\OO(1)})\lesssim (\log N)^{-\OO(1)}, \quad |z+2\sqrt{N}|\lesssim (\log N)^{\OO(1)} N^{-1/6}.
\end{align*}
The claim \eqref{e:error} follows from combining the following two estimates.
\begin{align}
\label{e:locallaw}&\sum_{1\leq j\leq N+s\atop j\neq i-1,i}\frac{1}{\la_j^{(s)}-z}
=\sqrt{N}+\OO\left((\log N)^{\OO(1)}N^{1/6}\right),\\
\label{e:concentrate}&\bP\left(\left|\sum_{1\leq j\leq N+s\atop j\neq i-1,i}\frac{|\langle \bmu_j^{(s)},\bm h^{(s+1)}\rangle|^2}{\la_j^{(s)}-z}-\frac{1}{\la_j^{(s)}-z}\right|\geq (\log N)^{\OO(1)} N^{1/6}\right)\lesssim (\log N)^{-\OO(1)}.
\end{align}
Since $H_{N+K}^{(s)}\in \cH_{N+s}$, we have 
$z-\la_{i-2}^{(s)}\gtrsim (\log N)^{-\OO(1)}N^{-1/6}$ and $\la_{i+1}^{(s)}-z\gtrsim (\log N)^{-\OO(1)}N^{-1/6}$. The sum of eigenvalues smaller than $z$ in \eqref{e:locallaw} satisfies 
\begin{align}\label{e:left}
\left|\sum_{1\leq j< i-1}\frac{1}{\la_j^{(s)}-z}\right|
\lesssim \ell (\log N)^{\OO(1)}N^{1/6}.
\end{align}
For the sum of the eigenvalues bigger than $z$, we need to use the rigidity estimates, i.e. \eqref{e:rigid},
\begin{align}\begin{split}\label{e:dif}
&\phantom{{}={}}\left|\sum_{j> i}\frac{1}{\la_j^{(s)}-z}
- N\int_{\la_{i+1}^{(s)}}^\infty \frac{\rho_{sc}(x/\sqrt N)}{x-z}\rd x\right|\\
&=\left|\int_{\la_{i+1}^{(s)}}^\infty\frac{|\{j: \la^{(s)}_j \in [\la_{i+1}^{(s)}, x]\}|-\sqrt N\mu_{sc}[\la_{i+1}^{(s)}/\sqrt N , x/\sqrt N]}{(x-z)^2}\rd x\right|\\
&\leq \int_{\la_{i+1}^{(s)}}^\infty\frac{(\log N)^{\OO(1)}}{|x-z|^2}\rd x
\lesssim (\log N)^{\OO(1)}N^{1/6}.
\end{split}\end{align}
Finally for the integral of the semi-circle distribution, under our assumption $z=-2\sqrt N+\OO((\log N)^{\OO(1)}N^{-1/6})$,  and $\la_{i+1}^{(s)}-z\asymp (\log N)^{\OO(1)}N^{-1/6}$,
\begin{align}\label{e:intsc}
\sqrt N\int_{\la_{i+1}^{(s)}}^\infty \frac{\rho_{sc}(x/\sqrt N)}{x-z}\rd x
=2\sqrt{N}+\OO\left((\log N)^{\OO(1)}N^{1/6}\right).
\end{align}
The first estimate \eqref{e:locallaw} follows from combining \eqref{e:left}, \eqref{e:dif} and \eqref{e:intsc}. 

The second estimate \eqref{e:concentrate} follows from a second moment computation and the Markov inequality.  
\begin{align*}\begin{split}
&\phantom{{}={}}\bE\left[\left|\sum_{1\leq j\leq N+s\atop j\neq i-1,i}\frac{|\langle \bmu_j^{(s)},\bm h^{(s+1)}\rangle|^2}{\la_j^{(s)}-z}-\frac{1}{\la_j^{(s)}-z}\right|^2\right]\\
&=\sum_{1\leq j,j'\leq N+s\atop j,j'\neq i-1,i}\frac{\bE[(|\langle \bmu_j^{(s)},\bm h^{(s+1)}\rangle|^2-1)(|\langle \bmu_{j'}^{(s)},\bm h^{(s+1)}\rangle|^2-1)]}{(\la_j^{(s)}-z)(\la_{j'}^{(s)}- z)}\\
&=\sum_{1\leq j,j'\leq N+s\atop j,j'\neq i-1,i}\frac{(2/\beta)\delta_{jj'}+\OO(\sum_\alpha |\bmu_{j\alpha}^{(s)}|^2 |\bmu_{j'\alpha}^{(s)}|^2)}{(\la_j^{(s)}-z)(\la_{j'}^{(s)}- z)}\\
&=\frac{2}{\beta}\sum_{1\leq j\leq N+s\atop j\neq i-1,i}\frac{1}{(\la_j^{(s)}-z)^2}+\OO\left(\frac{(\log N)^{\OO(1)}}{N}\right)\left(\sum_{1\leq j\leq N+s\atop j\neq i-1,i}\frac{1}{|\la_j^{(s)}-z|}\right)^2\lesssim (\log N)^{\OO(1)}N^{1/3},
\end{split}\end{align*}
where for the third equality, we used the delocalization of eigenvectors, \eqref{e:delocalize}; for the last inequality we used \eqref{e:locallaw}, and $\sum_{1\leq j\leq N+s\atop j\neq i-1,i}(\la_j^{(s)}-z)^{-2}\lesssim (\log N)^{\OO(1)}N^{1/3}$, which can be obtained by a similar argument as for \eqref{e:locallaw}. This finishes the proof of Proposition \ref{e:concentrate} and Claim \ref{c:error}.
\end{proof}

\section{Bulk Case}

When restricted to a line, the law of Wigner corner process is asymptotically the same as that of GOE/GUE, which is characterized by the Sine$_\beta$ process in the bulk. The following bulk universality results follow from \cite{MR3914908} using the rigidity estimates Theorem \ref{t:bulk} as input.
\begin{theorem}[Bulk Universality]\label{t:bulkU}
Let $H_{N}$ be a Wigner matrix (as in Definition \ref{def:gW}), with eigenvalues $\la_1\leq \la_2\leq \cdots \leq \la_N$. For any compactly supported smooth test function $F:\bR^k\mapsto \bR$, and energy level $E$
\begin{align*}\begin{split}
&\phantom{{}={}}\sum_{i_1,i_2,\cdots,i_k\in \qq{1,N}}\bE\left[O\left(\sqrt{N}\la_{i_1}-EN,\sqrt{N}\la_{i_2}-EN, \cdots, \sqrt{N}\la_{i_k}-EN\right)\right]\\
&=\sum_{i_1,i_2,\cdots,i_k\in \qq{1,N}}\bE_{GOE/GUE}\left[O\left(\sqrt{N}\la_{i_1}-EN, \sqrt{N}\la_{i_2}-EN, \cdots, \sqrt{N}\la_{i_k}-EN\right)\right]+\oo(1)
\end{split}\end{align*}

\end{theorem}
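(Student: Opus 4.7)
The proposed proof is essentially an appeal to the general bulk universality machinery of \cite{MR3914908}, verifying that its hypotheses are met for the Wigner matrices considered here. More precisely, the three-step strategy of Erd\H{o}s--Schlein--Yau requires (i) a local semicircle law with optimal rigidity on scale $1/\sqrt{N}$ in the bulk, (ii) the equilibration of Dyson Brownian motion on local scales, and (iii) a Green's function comparison step. The first ingredient, under the finite-fourth-moment assumption of Definition \ref{def:gW}, is supplied by Theorem \ref{t:rigid} together with the delocalization estimate \eqref{e:delocalize}; the second and third are the technical core of \cite{MR3914908}, which are insensitive to the tail behavior of the entries beyond what rigidity captures.

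Concretely, the plan is as follows. First, I would introduce the Gaussian-divisible ensemble $H_t = \mathrm{e}^{-t/2} H_N + \sqrt{1-\mathrm{e}^{-t}}\, G$, where $G$ is an independent GOE (resp.\ GUE) matrix, and note that the eigenvalues of $H_t$ evolve according to Dyson Brownian motion. Using rigidity on scale $(\log N)^{\OO(1)}/\sqrt{N}$ in the bulk (the same input provided here by Theorem \ref{t:rigid}), one invokes the local ergodicity / homogenization argument to deduce that for times $t$ slightly larger than $N^{-1}$, the local $k$-point correlation functions of $H_t$ in the bulk around the energy $E$ coincide (in the sense of the smooth test-function statement) with those of pure GOE/GUE. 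Second, one compares $H_N$ and $H_t$ by a Green's function comparison / moment matching argument: because $\bE[|h_{ij}|^4]<\infty$, the difference in expectations of smooth functionals of resolvents at spectral parameters with imaginary part $\eta \gg N^{-1}$ is negligible. Combining these two steps yields the desired equality of correlation functions up to $\oo(1)$.

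The main obstacle in this program is the replacement of the standard subexponential-tail assumption by the mere finite-fourth-moment hypothesis. This is handled in \cite{MR3914908} by a truncation and perturbation argument: one decomposes $H_N = H_N' + H_N''$ where $H_N'$ has truncated entries that enjoy the stronger rigidity needed for the DBM step, while $H_N''$ is small enough (in operator norm sense via fourth-moment Markov bounds) that it can be absorbed by a resolvent perturbation without spoiling the comparison. In our setting, this obstacle is already cleared because Theorem \ref{t:rigid} is stated under the same finite-fourth-moment hypothesis, so the input to the three-step strategy is available in the required form.

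Thus the proof reduces to checking that the smooth test-function formulation above is exactly the output of \cite[Theorem~X]{MR3914908} specialized to Wigner matrices at the energy $E\in(-2,2)$, and observing that the rigidity input it demands is precisely Theorem \ref{t:rigid}. No additional work specific to the corner process is needed at this stage; the corner structure will enter only in the proof of Theorem \ref{t:bulk}, where Theorem \ref{t:bulkU} will be used one level at a time and combined with the master equation \eqref{e:meq} via arguments parallel to those developed in Section 3 for the edge case.
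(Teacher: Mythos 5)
Your approach is essentially the same as the paper's: the paper disposes of Theorem \ref{t:bulkU} in a single sentence by citing \cite{MR3914908} and noting that the required rigidity input is supplied under the finite-fourth-moment hypothesis by Theorem \ref{t:rigid} (the text's reference to ``Theorem \ref{t:bulk}'' there is a typo for the rigidity theorem). Your expanded sketch of the three-step strategy (DBM relaxation plus Green's function comparison, with rigidity as input) is an accurate account of the machinery being invoked, so nothing further is needed.
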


In \cite[Theorem 7]{NaVi2}, J. Najnudel and B. Vir{\'a}g derived an optimal bounds on the variance of the number of particles of the Gaussian $\beta$ ensemble (G$\beta$E) in intervals of the real line.
\begin{theorem}[{\cite[Theorem 7]{NaVi2}}]
The Gaussian $\beta$ ensemble with $N$ particles is a probability measure
\begin{align}\label{t:GbEvar}
\bP_{G\beta E}(\la_1,\la_2,\cdots,\la_N)=\frac{1}{Z_{N,\beta}}\prod_{1\leq i<j\leq N}|\la_i-\la_j|^\beta \prod_{i=1}^Ne^{-(\beta/4)\la_i^2}.
\end{align} For any energy levels $E_1<E_2$, we have
\begin{align}\label{e:GbEvar}
\bE_{G\beta E}\left[\left(|\{j:\la_j\in [E_1\sqrt{N}, E_2\sqrt{N}]\}|-N\int_{E_1}^{E_2}\rho_{sc}(x)\rd x\right)^2\right]\lesssim \log(2+N(E_2-E_1)\wedge N).
\end{align}
\end{theorem}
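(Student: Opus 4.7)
The plan is to use the Dumitriu--Edelman tridiagonal realization of the Gaussian $\beta$-ensemble together with a phase-function (Brownian carousel) representation of the eigenvalue counting function, and then bound the variance by a direct It\^o-calculus estimate. As a starting point, I would exploit that the law \eqref{t:GbEvar} is realized as the spectrum of an explicit tridiagonal Jacobi matrix $T_N$ whose diagonal entries are independent centered Gaussians and whose subdiagonal entries are independent $\chi$-distributed variables. For each real energy $\lambda$ I introduce the Pr\"ufer phase $\varphi_k(\lambda)$ arising from the eigenvalue recurrence $T_N v=\lambda v$ with $v_0=0$; by the Sturm oscillation theorem,
\begin{align*}
|\{j:\la_j\leq \lambda\}|=\frac{1}{\pi}\varphi_N(\lambda)+O(1),
\end{align*}
so the centered count appearing in \eqref{e:GbEvar} equals, up to a bounded additive error, $(1/\pi)\bigl(\varphi_N(E_2\sqrt N)-\varphi_N(E_1\sqrt N)\bigr) - N\int_{E_1}^{E_2}\rho_{sc}(x)\rd x$.

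Next, I would invoke the Valk\'o--Vir\'ag diffusive approximation. After the bulk time change $k=\lfloor sN\rfloor$, $s\in[0,1]$, and appropriate recentering, the pair of phases $(\varphi_k(E_1\sqrt N),\varphi_k(E_2\sqrt N))$ is well approximated by a two-dimensional diffusion driven by a common Brownian motion coming from the joint fluctuations of the tridiagonal entries at each step. The phase gap
\begin{align*}
\Delta_s:=\varphi_{\lfloor sN\rfloor}(E_2\sqrt N)-\varphi_{\lfloor sN\rfloor}(E_1\sqrt N)
\end{align*}
then satisfies a scalar SDE whose deterministic drift reproduces $N\int_{E_1}^{E_2}\rho_{sc}(x)\rd x$ and whose diffusion coefficient $\sigma(s,\Delta_s)$ is bounded and, crucially, behaves like $1/s$ once the two phases have rotated past each other enough to decorrelate. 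Applying It\^o's isometry to $\Delta_1-\bE\Delta_1$ (and the analogous martingale decomposition of the discrete phase in the prelimit) yields
\begin{align*}
\var(\Delta_1)\lesssim \int_0^1 \bE\bigl[\sigma(s,\Delta_s)^2\bigr]\rd s,
\end{align*}
and combined with the decoupling bound $\sigma(s,\Delta_s)^2\lesssim 1/(1+sN(E_2-E_1))$ this integral is dominated by $\log(2+N(E_2-E_1)\wedge N)$, which is \eqref{e:GbEvar}.

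The main obstacle is making the diffusive approximation \emph{quantitative uniformly} in the pair $(E_1,E_2)$, in particular when the spacing $E_2-E_1$ is allowed to be as small as $1/N$, and controlling the non-Markovian corrections of the discrete tridiagonal phase relative to its SDE limit with errors smaller than the logarithm one is trying to prove. Equivalently, one must carefully handle the crossover between the ``coupled'' regime at small $k$, in which the two phases are essentially locked and contribute only $O(1)$ to the variance, and the ``decoupled'' regime at large $k$, in which they rotate nearly independently and generate the logarithmic contribution; the location of this crossover is exactly $k\sim 1/(E_2-E_1)$, which produces the precise $\log(N(E_2-E_1)\wedge N)$ dependence. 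Transferring the It\^o estimate back to the discrete phase via local central limit theorem estimates for the incremental rotations driven by the Gaussian and $\chi$ entries of $T_N$, while retaining the sharp constant, is where the real technical work lies.
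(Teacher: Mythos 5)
The paper does not prove this statement; it imports it verbatim as a black-box citation to Najnudel--Vir\'ag \cite{NaVi2}, so there is no in-paper proof to compare your attempt against. I will therefore comment on your proposal on its own terms, with the actual Najnudel--Vir\'ag argument as background.

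Your high-level strategy --- realize the G$\beta$E as the spectrum of the Dumitriu--Edelman tridiagonal matrix, pass to Pr\"ufer phases via Sturm oscillation, study the phase \emph{difference} at the two endpoints, and bound its variance through a martingale/It\^o decomposition with a crossover between a ``coupled'' small-$k$ regime and a ``decoupled'' large-$k$ regime --- is precisely the right framework, and is the one used in \cite{NaVi2}. You also correctly identify that the crossover location, at the step $k$ where the accumulated phase separation becomes order one, is what singles out $\log\bigl(2+N(E_2-E_1)\wedge N\bigr)$ rather than a stronger or weaker bound.

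However, there is a concrete gap in the quantitative part of your sketch. You posit a diffusion-coefficient bound $\sigma(s,\Delta_s)^2\lesssim 1/(1+sN(E_2-E_1))$ and assert that the It\^o-isometry integral $\int_0^1 \sigma^2\,\rd s$ is $\lesssim\log\bigl(2+N(E_2-E_1)\bigr)$. It is not: setting $c=N(E_2-E_1)$, one has
\begin{align*}
\int_0^1 \frac{\rd s}{1+sc} \;=\; \frac{\log(1+c)}{c},
\end{align*}
which tends to $0$ as $c\to\infty$. So the bound you wrote down would give a variance that is $o(1)$ for macroscopic intervals, which contradicts even the known $\log$-growth of the Sine$_\beta$ counting variance. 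The correct picture is different: one does not have a diffusion coefficient that \emph{decays} past the crossover. Rather, in the natural ``carousel'' time parametrization the noise coefficient for the phase difference is \emph{small} in the coupled regime (the two phases see nearly identical increments, so the difference martingale has small quadratic variation), becomes order one after decoupling, and the $1/s$-type weighting that ultimately produces the logarithm comes from the time change between the uniform step index $k$ and the carousel time, not from a decaying $\sigma$. Concretely, one needs something of the shape $\sigma^2(s)\lesssim\min\{1/s,\;c^2 s\}$ (or an equivalent two-regime bound in the carousel time), under which $\int_0^1\sigma^2\,\rd s\lesssim\log(2+c)$. Without getting this cross-over scaling right, the desired bound cannot be recovered.

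Beyond that arithmetic issue, the two obstacles you flag at the end --- making the diffusive approximation quantitative uniformly in $E_2-E_1$ down to the scale $1/N$, and controlling the discrete-to-SDE error below the logarithm --- are exactly where the real work is in \cite{NaVi2}; in particular one must keep the finite-$N$ tridiagonal recursion, not pass to the SDE limit prematurely, and track the martingale increments of the discrete Pr\"ufer phase directly. As written, the proposal identifies the right road map but does not close the argument, and the specific diffusion bound proposed is incorrect.
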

The eigenvalues of GOE/GUE have the same law as Gaussian $\beta$ ensemble corresponding to $\beta=1/\beta=2$ respectively. As a corollary of Theorem \ref{t:bulkU} and Theorem \ref{t:GbEvar}, we have the following bound on the variance of the number of eigenvalues of Wigner matrices in intervals of the real line.
\begin{corollary}\label{c:var}
Let $H_{N}$ be a Wigner matrix (as in Definition \ref{def:gW}), with eigenvalues $\la_1\leq \la_2\leq \cdots \leq \la_N$.  For any energy levels $E_1< E_2$, we have
\begin{align}\label{e:var}
\bE\left[\left(|\{j:\la_j\in [E_1\sqrt{N}, E_2\sqrt{N}]\}|-N\int_{E_1}^{E_2}\rho_{sc}(x)\rd x\right)^2\right]\lesssim (\log(2+N(E_2-E_1)\wedge N))^{\OO(1)}.
\end{align}
\end{corollary}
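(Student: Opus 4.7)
The plan is to transfer the G$\beta$E variance bound of Theorem \ref{t:GbEvar} to Wigner matrices using the bulk universality comparison Theorem \ref{t:bulkU}, with rigidity (Theorem \ref{t:rigid}) handling the non-smoothness of the counting function. The chief obstacle is that $\cN(I):=|\{j:\la_j\in[E_1\sqrt N,E_2\sqrt N]\}|$ is a sharp indicator on a macroscopic window, whereas Theorem \ref{t:bulkU} is phrased for smooth compactly supported test functions on the microscopic scale near a single energy.

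First I would fix a small smoothing scale $\eta>0$ (for instance $\eta=1/N^{2}$) and construct smooth cutoffs $\chi_-\le \chi_{[E_1,E_2]}\le\chi_+$ on $\bR$ that agree with the indicator outside $\eta$-neighborhoods of $E_1$ and $E_2$. Setting $\cN^\pm:=\sum_i\chi_\pm(\la_i/\sqrt N)$, the sandwich $\cN^-\le \cN(I)\le \cN^+$ together with Cauchy--Schwarz yields
\begin{align*}
\Var\,\cN(I)\le 2\,\Var\,\cN^- + 2\,\bE\bigl[(\cN^+-\cN^-)^2\bigr].
\end{align*}
Both right-hand quantities are moments of linear statistics of smooth functions, so Theorem \ref{t:bulkU} (applied with $k=1,2$) reduces them to the corresponding G$\beta$E quantities up to an additive $\oo(1)$ error. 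On the G$\beta$E side, a further sandwich by genuine counts, combined with Theorem \ref{t:GbEvar}, bounds $\Var_{G\beta E}\,\cN^-\lesssim(\log(2+N(E_2-E_1)\wedge N))^{\OO(1)}$, and also $\bE_{G\beta E}[(\cN^+-\cN^-)^2]\lesssim \log(2+N\eta\wedge N)+(N\eta)^2=\oo(1)$ for the above choice of $\eta$. Adding the bias contribution $(\bE\,\cN(I)-N\mu_{sc}(I/\sqrt N))^2\lesssim(\log N)^{\OO(1)}$, which is automatic from the rigidity bound \eqref{e:rigid}, delivers the claim. For the macroscopic regime $E_2-E_1\gtrsim 1$, rigidity alone already gives the polylogarithmic bound directly.

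The principal difficulty is applying Theorem \ref{t:bulkU} to the macroscopic cutoffs $\chi_\pm$. As stated, the theorem is a comparison on the microscopic scale near one fixed energy $E$. I would circumvent this by introducing a microscopic partition of unity $1=\sum_m\psi_m$ with each $\psi_m$ supported in an interval of length $\OO(1/\sqrt N)$ around some $E^{(m)}\in[E_1,E_2]$, writing $\chi_-=\sum_m\chi_-\psi_m$, and applying Theorem \ref{t:bulkU} piece by piece to the tensor products $(\chi_-\psi_m)\otimes(\chi_-\psi_{m'})$ that arise after squaring. Since the supports of $\psi_m$ and $\psi_{m'}$ are disjoint when $|m-m'|$ is large, only $\OO(\sqrt N(E_2-E_1))$ diagonal pairs contribute nontrivially, and the quantitative accumulation of the $\oo(1)$ errors across these cells should follow from the explicit Green function comparison estimates in \cite{MR3914908} underlying Theorem \ref{t:bulkU}. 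This bookkeeping is the technical heart of the argument.
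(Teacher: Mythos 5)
Your high-level strategy — transfer the G$\beta$E variance bound via bulk universality for small intervals, and fall back on rigidity for large ones — matches the paper's. But two intermediate steps diverge in ways that matter, and the first is a genuine gap. You split $\bE[(\cN(I)-N\mu_{sc})^2]=\Var\cN(I)+(\bE\cN(I)-N\mu_{sc})^2$ and bound the bias term by $(\log N)^{\OO(1)}$ via rigidity. This cannot deliver the claimed estimate: when $N(E_2-E_1)=\OO(1)$ the target bound is $\OO(1)$, while your bias estimate floors out at $(\log N)^{\OO(1)}$ no matter how short the interval is. The paper avoids this by never splitting off the bias. It keeps the deterministic centering inside the square, expands $\bE\bigl[\bigl(\sum_i F_\varepsilon(\sqrt N\la_i-EN)-N\int F\rho_{sc}\bigr)^2\bigr]$ in 1- and 2-point correlations, transfers the entire expression to G$\beta$E via Theorem \ref{t:bulkU}, and applies Theorem \ref{t:GbEvar}, which already controls the deterministically centered second moment — bias included — by $\log(2+N(E_2-E_1)\wedge N)$. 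Rigidity is reserved for the regime $E_2-E_1\geq N^{\fe-1}$, where $(\log N)^{\OO(1)}\lesssim(\log(N(E_2-E_1)))^{\OO(1)}$ and the crude bound is genuinely strong enough. You could repair your argument by imitating this: bound $(\cN-N\mu_{sc})^2\lesssim(\cN^- - N\mu_{sc})^2+(\cN^+ -\cN^-)^2$ and transfer both terms wholesale rather than isolating the mean.

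The second divergence is your mechanism for stretching Theorem \ref{t:bulkU} beyond microscopic support. Your partition-of-unity scheme at scale $1/\sqrt N$ is much heavier than what the paper does, and it is incomplete: the theorem as stated compares correlation functions at a single energy $E$, so off-diagonal products $(\chi_-\psi_m)\otimes(\chi_-\psi_{m'})$ with $m\neq m'$ live around two different energies and are not directly covered; and summing $\OO(\sqrt N(E_2-E_1))$ many unquantified $\oo(1)$ errors is not controlled without reopening the Green function comparison. The paper sidesteps all of this in one line: it records that the comparison proof in \cite{MR3914908} goes through verbatim for test functions supported on $[-N^\fe,N^\fe]^k$ with $\fe$ small, and then uses a single mesoscopic smoothed indicator $F_\varepsilon=\mathbf{1}_{[N(E_1-E),N(E_2-E)]}*\phi(\cdot/\varepsilon)/\varepsilon$ with $\varepsilon\to 0$, centered at $E=(E_1+E_2)/2$. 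Your sandwich $\chi_-\leq\mathbf{1}\leq\chi_+$ is a harmless variant of the paper's single $F_\varepsilon$; the real economy the paper achieves over your proposal is avoiding the partition-of-unity bookkeeping entirely.
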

\begin{proof}
Although theorem \ref{t:bulkU} is stated for test functions which are compactly supported, the statement holds for tests functions with support depending on $N$. The proof in \cite[Theorem 2.2]{MR3914908} carries through when the test functions are supported on $[-N^{\fe}, N^{\fe}]^{k}$ with some sufficiently small $\fe>0$. We first prove the statement \eqref{e:var} for $E_2-E_1\leq N^{\fe-1}$. In this case, the left-hand side of \eqref{e:var} can be replaced by 
the corresponding expectation with respect to $GOE/GUE$, then it 
is a special case of \eqref{e:GbEvar}
\begin{align}\label{e:GbEvar2}
\bE_{GOE/GUE}\left[\left(|\{j:\la_j\in [E_1\sqrt{N}, E_2\sqrt{N}]\}|-N\int_{E_1}^{E_2}\rho_{sc}(x)\rd x\right)^2\right]\lesssim \log(2+N(E_2-E_1)\wedge N).
\end{align}
To carry out the proof, we take a bump function $\phi(x)>0$ supported on $[-1,1]$ with $\int \phi(x)\rd x=1$. Fix any energy levels $E_2-E_1\leq N^{\fe}$, we take $E=(E_1+E_2)/2$ and the test function 
\begin{align*}
F_\varepsilon(x)=\bm 1_{[N(E_1-E), N(E_2-E)]}(x)*\phi(x/\varepsilon)/\varepsilon,
\end{align*}
Then Theorem \ref{t:bulkU} and \eqref{e:GbEvar2} together imply
\begin{align}\begin{split}\label{e:bound1}
&\phantom{{}={}}\bE\left[\left(|\{j:\la_j\in [E_1\sqrt{N}, E_2\sqrt{N}]\}|-N\int_{E_1}^{E_2}\rho_{sc}(x)\rd x\right)^2\right]\\
&=\lim_{\varepsilon\rightarrow 0}\bE\left[\left(\sum_i F_\varepsilon(\sqrt{N}\lambda_i-EN)-N\int F(x)\rho_{sc}(x)\rd x\right)^2\right]\\
&=\lim_{\varepsilon\rightarrow 0}\bE_{G\beta E}\left[\left(\sum_i F_\varepsilon(\sqrt{N}\lambda_i-EN)-N\int F(x)\rho_{sc}(x)\rd x\right)^2\right]+\oo(1)\\
&\lesssim (\log(2+N(E_2-E_1))^{\OO(1)}.
\end{split}\end{align}
If $E_2-E_1\geq N^{\fe-1}$, then it follows from Theorem \ref{t:rigid}, with high probability 
\begin{align}\label{e:bound2}
\left|\{j:\la_j\in [E_1\sqrt{N}, E_2\sqrt{N}]\}|-N\int_{E_1}^{E_2}\rho_{sc}(x)\rd x\right|\leq (\log N)^{\OO(1)}\lesssim \log (2+N(E_2-E_1)\wedge N)^{\OO(1)}.
\end{align}
The statement \eqref{e:var} follows from combining \eqref{e:bound1} and \eqref{e:bound2}.
\end{proof}

\subsection{Proof of Theorem \ref{t:bulk}}

Fix any $E\in (-2,2)$, we denote $j^{(s)}$ the smallest index such that $\lambda_{j^{(s)}}^{(s)}\geq E\sqrt{N}$. We rescale and relabel the eigenvalues 
\begin{align*}
\mu_j^{(s)}=\sqrt{N(4-E^2)}(\lambda^{(s)}_{j+j^{(s)}}-E\sqrt N), \quad 1\leq j+j^{(s)}\leq N+s,
\end{align*}
and relabel the weights
\begin{align*}
\gamma_j^{(s)}=|\langle \bmu_{j+j^{(s)}}, \bmh^{(s+1)}\rangle|^2, \quad  j+j^{(s)}\in \qq{1,N+s},\quad\gamma_j^{(s)}=1,\quad  j+j^{(s)}\not\in \qq{1,N+s},
\end{align*}
where $\bmh^{(s+1)}=(h_{1 N+s+1}, h_{2 N+s+1}, \cdots, h_{N+s N+s+1})^\top$ is independent of $H_{N+K}^{(s)}$.

With the new notations $\mu_j^{(s)}$ and $\gamma_j^{(s)}$, the point process is simply
\begin{align*}
\Theta_N^{(s)}=\sum_{j: 1\leq j+j^{(s)}\leq N+s}\delta_{\mu_j^{(s)}},
\end{align*}
where the labeling satisfies $\mu_{-1}^{(s)}<0\leq \mu_0^{(s)}$.  
We denote the doubly infinite sequence $\Gamma_N^{(s)}=\{\gamma_j^{(s)}\}_{j\in \bZ}$. It follows from the same argument as in Proposition \ref{p:eigenloc} that $\Gamma_N^{(s)}$ converges coordinatewise to a doubly infinite sequence with independent Gamma distributed entries, which has density $C_\beta x^{\beta/2-1}e^{-\beta x/2}$.
Given $\{\mu_j^{(s)}\}_{j\in \bZ}$ and $\{\gamma_j^{(s)}\}_{j\in \bZ}$, the point process $\Theta_N^{(s+1)}$ is the empirical zeros of the master equation \eqref{e:meq}
\begin{align}\label{e:recur}
\frac{h_{N+s+1N+s+1}}{\sqrt{N(4-E^2)}}-\frac{z}{N(4-E^2)}-\frac{E}{\sqrt{4-E^2}}-\sum_{j\in \bZ}\frac{\gamma_j^{(s)}}{\mu_j^{(s)}-z}=0.
\end{align}

Thanks to the bulk universality result, Theorem \ref{t:bulkU}, the point process $\Theta_N^{(s)}$ converges in distribution to the Sine$_\beta$ point process. Especially the family $\{\Theta_N^{(s)}\}_{0\leq s\leq K}$ is tight in the space of probability measures on $\cM(\bR)$. From the tightness, it is enough to prove that the law of the Markov process chain in \eqref{e:markov} is the only possible limit for a subsequence of the laws of $\{\Theta_N^{(s)}\}_{0\leq s\leq K}$. We define the following random variables $\{Y_N^{(s)}, Z_N^{(s)}\}_{0\leq s\leq K}$,
\begin{align*}
&Y_N^{(s)}=\sup_{x\in \bZ_{\geq 0}} (1+x)^{-3/4}\left(\Delta \Theta^{(s)}_{N}[0,x]+\Delta \Theta^{(s)}_{N}[-x,0]\right),\\
&Z_N^{(s)}=\sup_{x\in \bZ_{\geq 0}} (1+x^{\fd-1})^{-3/4}\left(
\left|\sum_{j\in\qq{x^{\fd},(x+1)^\fd}}(\gamma^{(s)}_j-1)\right|
+\left|
\sum_{-j\in\qq{x^\fd,(x+1)^{\fd}}}(\gamma^{(s)}_{j}-1)\right|
\right),
\end{align*}
where $\fd>0$ will be chosen later and
\begin{align*}
\Delta \Theta^{(s)}_{N}[a,b]=\left|\Theta_N^{(s)}[a,b]-N\int_{E+a/N\sqrt{4-E^2}}^{E+b/N\sqrt{4-E^2}}\rho_{sc}(x)\rd x\right|.
\end{align*}
The families $\{Y_N^{(s)}\}_{N\geq 1}$ $\{Z_N^{(s)}\}_{N\geq 1}$ are tight. Indeed, thanks to Corollary \ref{c:var}, we have
\begin{align*}
\bE\left[(Y_N^{(s)})^2\right]
&\leq \sum_{x\in \bZ_{\geq 0}}\frac{2}{(1+x)^{3/2}}\left(\bE\left(\Delta \Theta^{(s)}_{N}[0,x]\right)^2+\bE\left(\Delta \Theta^{(s)}_{N}[-x,0]\right)^2\right)\\
&\lesssim\sum_{x\in \bZ_{\geq 0}, x\leq N^\fe}\frac{\log (2+x)^{\OO(1)}}{(1+x)^{3/2}}+\sum_{x\in \bZ_{\geq 0}, x\geq N^\fe}\frac{\log (N)^{\OO(1)}}{(1+x)^{3/2}}\lesssim 1.
\end{align*}
For $Z_N^{(s)}$, similarly, we have
\begin{align}\label{e:ZNb}
\bE\left[(Z_N^{(s)})^2\right]
&\leq \sum_{x\in \bZ_{\geq0}}\frac{2}{(1+x^{\fd-1})^{3/2}}
\left(
\bE\left|\sum_{j\in\qq{x^{\fd},(x+1)^\fd}}(\gamma^{(s)}_j-1)\right|^2
+\bE\left|
\sum_{-j\in\qq{x^\fd,(x+1)^{\fd}}}(\gamma^{(s)}_{j}-1)\right|^2.
\right)
\end{align}
To compute the expectations on the righthand side of \eqref{e:ZNb}, we can first integrate out $\bmh^{(s+1)}$,
\begin{align*}
\bE[(\gamma_j^{(s)}-1)(\gamma_{j'}^{(s)}-1)]
=2/\beta\delta_{jj'}+\OO\left(\bE\left[\sum_{\al}|\bmu^{(s)}_{j\al}|^2|\bmu^{(s)}_{j'\al}|^2\right]\right)=2/\beta\delta_{jj'}+\OO\left(\frac{(\log N)^{\OO(1)}}{N}\right),
\end{align*}
where in the last equality, we used \eqref{e:delocalize} that the eigenvectors are delocalized with high probability. Therefore, it follows that for any $x\lesssim N^{1/\fd}$,
\begin{align}\begin{split}\label{e:bd2}
\bE\left|\sum_{j\in\qq{x^{\fd},(x+1)^\fd}}(\gamma^{(s)}_j-1)\right|^2
+\bE\left|
\sum_{-j\in\qq{x^\fd,(x+1)^{\fd}}}(\gamma^{(s)}_{j}-1)\right|^2
&\lesssim \sum_{j,j'\in\qq{x^{\fd},(x+1)^\fd}}\beta\delta_{jj'}+\frac{(\log N)^{\OO(1)}}{N}\\
&\lesssim x^{\fd-1}+\frac{(\log N)^{\OO(1)}}{N}x^{2(\fd-1)}\lesssim x^{\fd-1}.
\end{split}\end{align}
We plug \eqref{e:bd2} into \eqref{e:ZNb}, 
\begin{align*}
\bE\left[(Z_N^{(s)})^2\right]
&\lesssim \sum_{x\in \bZ_{\geq 0}}\frac{x^{\fd-1}}{(1+x^{\fd-1})^{3/2}}\lesssim 1.
\end{align*}

As a consequence, $\{\Theta^{(s)}_N,\Gamma_N^{(s)}, Z_N^{(s)}, Y_N^{(s)}\}_{0\leq s\leq K}$ form a tight family of random variables. We can find a subsequence for which they converge in law. By Skorokhod representation theorem, this family of random variables $\{\Theta^{(s)}_N, \Gamma_N^{(s)}, Z_N^{(s)}, Y_N^{(s)}\}_{0\leq s\leq K}$ has the same law as some family $\{\tilde \Theta^{(s)}_N, \tilde\Gamma_N^{(s)}, \tilde Z_N^{(s)}, \tilde Y_N^{(s)}\}_{0\leq s\leq K}$, which converges almost surely to $\{\Theta^{(s)}, \Gamma^{(s)}, Z^{(s)}, Y^{(s)}\}_{0\leq s\leq K}$. 
In the following we prove that the law of the Markov process chain in \eqref{e:markov} is the only possible law for 
$\{\Theta^{(s)}\}_{0\leq s\leq K}$.

For simplicity of notations, we denote
\begin{align}\label{e:notation}
\tilde \Theta^{(s)}_N=\sum_{j}\delta_{\mu_j^{N}},\quad
\tilde \Gamma_N^{(s)}=\{\gamma_j^{N}\}_{j\in \bZ},\quad
\Theta^{(s)}=\sum_{j}\delta_{\mu_j},\quad
\Gamma^{(s)}=\{\gamma_j\}_{j\in \bZ}.
\end{align}
By the same argument as in Proposition \ref{p:eigenloc}, $\Gamma^{(s)}=\{\gamma_j\}_{j\in \bZ}$ are independent, and $\gamma_j$ follows Gamma distribution with density $C_\beta x^{\beta/2-1}e^{-\beta x/2}$.
Conditionally on $\Theta_N^{(s)}$, the new point process $\Theta_N^{(s+1)}$ is the empirical zeros of the equation \eqref{e:recur}. By our construction of $\{\tilde \Theta^{(s)}_N, \tilde\Gamma_N^{(s)}, \tilde Z_N^{(s)}, \tilde Y_N^{(s)}\}_{0\leq s\leq K}$ which has the same law as $\{\Theta^{(s)}_N, \Gamma_N^{(s)}, Z_N^{(s)}, Y_N^{(s)}\}_{0\leq s\leq K}$. Therefore, with the notations introduced in \eqref{e:notation}, the point process $\tilde \Theta_N^{(s+1)}$ is the empirical zeros of the equation
\begin{align}\label{e:recur2}
\frac{h_{N+s+1N+s+1}}{\sqrt{N(4-E^2)}}-\frac{z}{N(4-E^2)}-\frac{E}{\sqrt{4-E^2}}-\sum_{j\in \bZ}\frac{\gamma_j^{N}}{\mu_j^{N}-z}=0.
\end{align}
Since by our construction $\{\tilde \Theta^{(s)}_N, \tilde\Gamma_N^{(s)}, \tilde Z_N^{(s)}, \tilde Y_N^{(s)}\}_{0\leq s\leq K}$ converges almost surely to $\{\Theta^{(s)}, \Gamma^{(s)}, Z^{(s)}, Y^{(s)}\}_{0\leq s\leq K}$, the following holds almost surely,
\begin{align}\label{e:alcov}
\lim_{N\rightarrow \infty}\mu_j^{N}=\mu_j, \quad
\lim_{N\rightarrow \infty}\gamma_j^{N}=\gamma_j.
\end{align}
In the following we prove 

\begin{proposition}\label{p:limit}
Under the assumptions of Theorem \ref{t:bulk}, if $\{\tilde \Theta^{(s)}_N, \tilde\Gamma_N^{(s)}, \tilde Z_N^{(s)}, \tilde Y_N^{(s)}\}_{0\leq s\leq K}$ converges almost surely to $\{\Theta^{(s)}, \Gamma^{(s)}, Z^{(s)}, Y^{(s)}\}_{0\leq s\leq K}$, using the notations introduced in \eqref{e:notation}, then we have
 almost surely
\begin{align*}
\lim_{N\rightarrow\infty}\frac{h_{N+s+1N+s+1}}{\sqrt{N(4-E^2)}}-\frac{z}{N(4-E^2)}-\frac{E}{\sqrt{4-E^2}}-\sum_{j\in \bZ}\frac{\gamma_j^{N}}{\mu_j^{N}-z}
=-\sum_{j\in \bZ}\frac{\gamma_j}{\mu_j-z}-\frac{E}{2\sqrt{4-E^2}} ,
\end{align*}
on $z\in \bR\setminus\{\mu_j\}_{j\in \bZ}$.
\end{proposition}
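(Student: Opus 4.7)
The deterministic constant $-E/\sqrt{4-E^2}$ is already explicit, and among the remaining pieces, $h_{N+s+1\,N+s+1}/\sqrt{N(4-E^2)}$ has mean zero and variance $O(1/N)$, so after enlarging the Skorokhod coupling by this scalar it converges almost surely to $0$; and $z/(N(4-E^2))\to 0$ trivially for fixed $z$. The heart of the proof is therefore to show, almost surely,
\begin{align*}
\lim_{N\to\infty}\sum_{j\in\bZ}\frac{\gamma_j^N}{\mu_j^N-z}=\sum_{j\in\bZ}\frac{\gamma_j}{\mu_j-z}-\frac{E}{2\sqrt{4-E^2}}.
\end{align*}

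The plan is to split the sum at a large threshold $|\mu_j^N|\leq R$ versus $|\mu_j^N|>R$ (with $|z|<R$), pass to the limit in $N$ first, and then send $R\to\infty$. The bulk part contains only $O(R)$ terms: the assumed coordinatewise a.s.\ convergences $\mu_j^N\to\mu_j$ and $\gamma_j^N\to\gamma_j$ yield $\sum_{|\mu_j^N|\leq R}\gamma_j^N/(\mu_j^N-z)\to \sum_{|\mu_j|\leq R}\gamma_j/(\mu_j-z)$, which as $R\to\infty$ converges to $\sum_{j\in\bZ}\gamma_j/(\mu_j-z)$ by the very definition of the weighted Stieltjes transform on $\cL\times\cG$ recalled before \cite[Theorem 3]{NaVi}.

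For the tail, decompose $\gamma_j^N=1+(\gamma_j^N-1)$. The ``deterministic tail'' $\sum_{|\mu_j^N|>R}1/(\mu_j^N-z)$ is handled by integration by parts against the empirical counting function; the rigidity-type bound $Y_N^{(s)}=\OO(1)$ permits replacing the counting function by its semi-circle prediction up to an error $\OO((1+|\mu|)^{3/4})$, and so one obtains the approximation by the integral of $\rho^{\mathrm{emp}}_N(\mu)/(\mu-z)$ over $|\mu|>R$, where $\rho_N^{\mathrm{emp}}$ is the semi-circle density rescaled to $\mu$-coordinates. The change of variables $v=E+\mu/(N\sqrt{4-E^2})$ turns this into $\int\rho_{sc}(v)\,dv/[\sqrt{4-E^2}(v-E)-z/N]$ with a shrinking hole at $v=E$, whose $N\to\infty$ limit is the principal-value integral $-E/(2\sqrt{4-E^2})$ minus the uniform contribution $\mathrm{P.V.}\int_{-R}^R d\mu/[2\pi(\mu-z)]=\log|(R-z)/(R+z)|$, which itself vanishes as $R\to\infty$. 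The ``fluctuating tail'' $\sum_{|\mu_j^N|>R}(\gamma_j^N-1)/(\mu_j^N-z)$ is controlled by summation by parts along the dyadic blocks $[x^{\fd},(x+1)^{\fd}]$ appearing in the definition of $Z_N^{(s)}$: each block sum of $\gamma_j^N-1$ is bounded by $Z_N^{(s)}(1+x^{\fd-1})^{3/4}$, while the corresponding $\mu_j^N$ is of order $x^{\fd}$, giving a geometric-series bound $Z_N^{(s)}\cdot O(R^{-\alpha})$ for some $\alpha>0$ provided $\fd$ is chosen $>1$. Since the a.s.\ limit $Z^{(s)}$ is finite, this piece vanishes as $R\to\infty$ uniformly in $N$.

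The principal obstacle is the subtle cancellation producing the extra constant $-E/(2\sqrt{4-E^2})$: the bulk-vs-tail split of both the semi-circle principal-value integral and of the discrete sum each contain logarithmic-in-$R$ contributions that must cancel precisely between the two pieces. Consequently the error estimates for both pieces must be uniform in $R$ and $N$ to justify the interchange of limits; the rigidity-type bounds $Y_N^{(s)},Z_N^{(s)}=\OO(1)$ established above are exactly what furnish this uniformity.
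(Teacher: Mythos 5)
Your proposal is correct and follows essentially the same route as the paper: a bulk/tail split, a.s.\ coordinatewise convergence of $(\mu_j^N,\gamma_j^N)$ on the bulk combined with the Stieltjes transform limit from \cite[Theorem 3]{NaVi}, and the tightness bounds $Y_N^{(s)},Z_N^{(s)}=\OO(1)$ to control the tail by integration/summation by parts, with the principal value of $m_{sc}$ at $E$ supplying the constant $-E/(2\sqrt{4-E^2})$. The only cosmetic difference is that you truncate at a position threshold $R$ while the paper truncates at a fixed index $\fj$ (equivalent via $\mu_\fj^N=2\pi\fj+\OO(\fj^{3/4})$), and your closing remark overstates the ``subtle cancellation'': the $\tfrac{1}{2\pi}\log|(R-z)/(R+z)|$ term in the deterministic tail already vanishes as $R\to\infty$ on its own, so bulk and tail limits can be taken independently without needing a precise cancellation between them.
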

Then it follows from \eqref{e:recur2} and Proposition \ref{p:limit} that the new point process $\Theta^{(s+1)}$ is the empirical zeros of equation
\begin{align*}
-\frac{E}{2\sqrt{4-E^2}}-\sum_{j\in \bZ}\frac{\gamma_j}{\mu_j-z}=0.
\end{align*}
Especially, we have
\begin{align*}
\Theta^{(s+1)}=S^{-1}_{\sum_{j \in \bR }\gamma_j\delta_{\mu_j}}\left(-\frac{E}{2\sqrt{4-E^2}}\right)=\cD(\Theta^{(s)}, \Gamma^{(s)},h),\quad h=-\frac{E}{2\sqrt{4-E^2}},
\end{align*}
and $\Gamma^{(s)}=\{\gamma_j\}_{j\in \bZ}$ are independent, with Gamma distribution with density $C_\beta x^{\beta/2-1}e^{-\beta x/2}$.
We can iterate the argument above for $s=0,1,2,\cdots, K-1$, to conclude that $\{\Theta^{(s)}_N\}_{0\leq s\leq K}$ forms a Markov chain and converges in distribution to the Markov chain \eqref{e:markov} for the level $h=-E/2\sqrt{4-E^2}$.

%

\begin{proof}[Proof of Proposition \ref{p:limit}]
Since $h_{N+s+1N+s+1}$ has bounded fourth moment, almost surely, 
\begin{align*}
\lim_{N\rightarrow\infty}\frac{h_{N+s+1N+s+1}}{\sqrt{N(4-E^2)}}-\frac{z}{N(4-E^2)}=0.
\end{align*}
It remains to show that 
\begin{align}\label{e:claim}
\lim_{N\rightarrow\infty}\sum_{j\in \bZ}\frac{\gamma_j^{N}}{\mu_j^{N}-z}
=\sum_{j\in \bZ}\frac{\gamma_j}{\mu_j-z}-\frac{E}{2\sqrt{4-E^2}}.
\end{align}

Almost surely $\tilde Z_N^{(s)}, \tilde Y_N^{(s)}, Z^{(s)}, Y^{(s)}$ are all bounded. Especially for any $x\in \bZ_{\geq 0}$, it holds
\begin{align}\begin{split}\label{e:prelimbound}
&\left||\{j:\mu_j^N\in [0,x]\}|-N\mu_{sc}\left[ E, E+\frac{x}{N\sqrt{4-E^2}}\right]\right|\lesssim (1+x)^{3/4},\\
&\left||\{j:\mu_j^N\in [-x,0]\}|-N\mu_{sc}\left[E-\frac{x}{N\sqrt{4-E^2}},E\right]\right|\lesssim (1+x)^{3/4},\\
&\left|\sum_{j\in\qq{x^{\fd},(x+1)^\fd}}(\gamma^N_j-1)\right|
,\left|
\sum_{-j\in\qq{x^\fd,(x+1)^{\fd}}}(\gamma^N_{j}-1)\right|
\lesssim (1+x^{\fd-1})^{3/4},
\end{split}\end{align} 
and similar estimates hold when we replace $\{\mu_j^N\}_{j\in \bZ}, \{\gamma_j^N\}_{j\in \bZ}$ by $\{\mu_j\}_{j\in \bZ}, \{\gamma_j\}_{j\in \bZ}$,
\begin{align}\begin{split}\label{e:limbound}
&\left||\{j:\mu_j\in [0,x]\}|-x/2\pi\right|,\left||\{j:\mu_j\in [-x,0]\}|-x/2\pi\right|\lesssim (1+x)^{3/4},\\
&\left|\sum_{j\in\qq{x^{\fd},(x+1)^\fd}}(\gamma_j-1)\right|
,\left|
\sum_{-j\in\qq{x^\fd,(x+1)^{\fd}}}(\gamma_{j}-1)\right|
\lesssim (1+x^{\fd-1})^{3/4}.
\end{split}\end{align}

Fix some large integer $\fj>0$, we denote
\begin{align*}
S_{N,\frak j}(z)=\sum_{|j|\leq \fj}\frac{\gamma_{j}^{N}}{\mu_{j}^{N}-z},\quad
S_{\fj}(z)=\sum_{|j|\leq \fj}\frac{\gamma_j}{\mu_j-z}.
\end{align*}
Then thanks to \eqref{e:alcov} we have
\begin{align}\label{e:divj}
\lim_{N\rightarrow \infty}S_{N,\fj}(z)=S_{\fj}(z).
\end{align}
From \eqref{e:limbound}, we have $\{\mu_j\}_{j\in \bZ}\in \cL$ and $\{\gamma_j\}_{j\in \bZ}\in\cG$ ($\cL$ and $\cG$ are defined in Section \ref{s:main}). It follows from \cite[Theorem 3]{NaVi} the Stieltjes transform of the weighted measure $\sum_{j\in \bZ}\gamma_j \delta_{\mu_j}$ is well-defined,
\begin{align}\label{e:jlim}
\lim_{\fj\rightarrow \infty}S_\fj(z)=\sum_{j\in \bZ} \frac{\gamma_j}{\mu_j-z}.
\end{align}
In the rest, we show that
\begin{align}\label{e:large}
\left|\sum_{|j|\geq \fj}\frac{\gamma_{j}^{N}}{\mu_{j}^{N}-z}-\frac{1}{\sqrt{4-E^2}}\int_{|x-E|\geq 2\pi \fj/N\sqrt{4-E^2}}\frac{\rho_{sc}(x)}{x-z}\rd x \right|\lesssim \fj^{-(1-1/\fd)/4}+\fj^{-1/\fd}.
\end{align}
Then the statement \eqref{e:claim} follows from combining \eqref{e:divj}, \eqref{e:jlim} and \eqref{e:large},
\begin{align*}\begin{split}
&\lim_{N\rightarrow \infty}\sum_{j\in \bZ}\frac{\gamma_j^{N}}{\mu_j^{N}-z}
=\lim_{\fj\rightarrow\infty}\lim_{N\rightarrow \infty}S_{N,\fj}(z)
+\sum_{|j|\geq \fj}\frac{\gamma_{j}^{N}}{\mu_{j}^{N}-z}\\
&=\lim_{\fj\rightarrow\infty} S_\fj(z)+\frac{1}{\sqrt{4-E^2}}
\int_{|x-E|\geq 2\pi \fj/N\sqrt{4-E^2}}\frac{\rho_{sc}(x)}{x-z}\rd x\\
&=\sum_{j\in \bZ}\frac{\gamma_j}{\mu_j-z}+
P.V.\frac{1}{\sqrt{4-E^2}}\int\frac{\rho_{sc}(x)}{x-z}\rd x
=\sum_{j\in \bZ}\frac{\gamma_j}{\mu_j-z}-\frac{E}{2\sqrt{4-E^2}}.
\end{split}\end{align*}

In the following we prove \eqref{e:large}. Thanks to \eqref{e:prelimbound}, it holds
\begin{align*}
|\{j:\mu^N_j \in [0,x]\}|=N\int_0^{x/N\sqrt{4-E^2}}\rho_{sc}(E+x)\rd x+\OO((1+x)^{3/4})\asymp 1+x.
\end{align*}
As a consequence, for any fixed $x\in \bZ_{\geq 0}$, as $N$ goes to infinity, 
\begin{align}\label{e:countbound}
|\{j:\mu^N_j \in [0,x]\}|=\frac{x}{2\pi}+\OO((1+x)^{3/4})+\oo(1).
\end{align}
By plugging $x=2\pi \fj\pm C\fj^{3/4}$ for some $C>0$ and $\fj>0$ sufficiently large in \eqref{e:countbound}, we get the estimates
\begin{align}\label{e:mjbound}
\mu_\fj^N=2\pi \fj+\OO(\fj^{3/4}),
\end{align}
and by symmetry, the statement \eqref{e:mjbound} holds also for $\fj<0$. We split the lefthand side of \eqref{e:large} in two parts,
\begin{align}\begin{split}\label{e:two}
&\phantom{{}={}}\sum_{|j|\geq \fj}\frac{\gamma_{j}^{N}}{\mu_{j}^{N}-z}-\frac{1}{\sqrt{4-E^2}}\int_{|x-E|\geq 2\pi \fj/N\sqrt{4-E^2}}\frac{\rho_{sc}(x)}{x-z}\rd x\\
&=\sum_{|j|\geq \fj}\frac{\gamma_{j}^{N}-1}{\mu_{j}^{N}-z}
+\sum_{|j|\geq \fj}\frac{1}{\mu_{j}^{N}-z}
-\frac{1}{\sqrt{4-E^2}}\int_{|x-E|\geq 2\pi \fj/N\sqrt{4-E^2}}\frac{\rho_{sc}(x)}{x-z}\rd x.
\end{split}\end{align}
For the first term in \eqref{e:two}, we estimate the sum corresponding to $j\geq \fj$. The sum corresponding to $j\leq -\fj$ can be estimated in exactly the same way.
\begin{align}\begin{split}\label{e:sumerror}
&\phantom{{}={}}\sum_{j\geq \fj}\frac{\gamma_{j}^{N}-1}{\mu_{j}^{N}-z}
=\sum_{x\geq \fj^{1/\fd}}\sum_{j\in \qq{x^\fd, (x+1)^\fd}}\frac{\gamma_{j}^{N}-1}{\mu_{j}^{N}-z}\\
&=\sum_{x\geq \fj^{1/\fd}}\sum_{j\in \qq{x^\fd, (x+1)^\fd}}\frac{\gamma_{j}^{N}-1}{\mu_{\floor{x^\fd}}^{N}-z}+\frac{(\gamma_{j}^{N}-1)(\mu_{\floor{x^\fd}}^N-\mu_j^N)}{(\mu_j^N-z)(\mu_{\floor{x^\fd}}^{N}-z)}
\end{split}\end{align}
For the first term on the righthand side of \eqref{e:sumerror}, we estimate it using \eqref{e:prelimbound}
 and \eqref{e:mjbound} 
\begin{align}\label{e:tm1}
\left|\sum_{x\geq \fj^{1/\fd}}\sum_{j\in \qq{x^\fd, (x+1)^\fd}}\frac{\gamma_{j}^{N}-1}{\mu_{\floor{x^\fd}}^{N}-z}\right|
\lesssim
\sum_{x\geq \fj^{1/\fd}}\frac{(1+x^{\fd-1})^{3/4}}{x^{\fd}}\lesssim \fj^{-(1-1/\fd)/4}
\end{align}
For the second term on the righthand side of \eqref{e:sumerror}, we estimate it using \eqref{e:prelimbound} and \eqref{e:mjbound}  again
\begin{align}\begin{split}\label{e:tm2}
&\phantom{{}={}}\left|\sum_{x\geq \fj^{1/\fd}}\sum_{j\in \qq{x^\fd, (x+1)^\fd}}\frac{(\gamma_{j}^{N}-1)(\mu_{\floor{x^\fd}}^N-\mu_j^N)}{(\mu_j^N-z)(\mu_{\floor{x^\fd}}^{N}-z)}\right|
\lesssim 
\sum_{x\geq \fj^{1/\fd}}\sum_{j\in \qq{x^\fd, (x+1)^\fd}}\frac{(\gamma_{j}^{N}+1)(\mu_{\floor{(x+1)^\fd}}^N-\mu_{\floor{x^\fd}}^N)}{x^{2\fd}}\\
&\lesssim
\sum_{x\geq \fj^{1/\fd}}\frac{(\mu_{\floor{(x+1)^\fd}}^N-\mu_{\floor{x^\fd}}^N)}{x^{1+\fd}}
\leq \sum_{x\geq \fj^{1/\fd}}\mu_{\floor{x^\fd}}^N\left(\frac{1}{x^{1+\fd}}-\frac{1}{(x+1)^{1+\fd}}\right)
\leq \sum_{x\geq \fj^{1/\fd}}\frac{1}{x^{2}}\lesssim \fj^{-1/\fd}.
\end{split}\end{align}
We obtain an upper bound for \eqref{e:sumerror} by combining \eqref{e:tm1} and \eqref{e:tm2},
\begin{align}\label{e:mid}
\left|\sum_{j\geq \fj}\frac{\gamma_{j}^{N}-1}{\mu_{j}^{N}-z}
\right|\lesssim \fj^{-(1-1/\fd)/4}+\fj^{-1/\fd}.
\end{align}

For the second term in \eqref{e:two}, we first estimate the sum corresponding to $j\geq \fj$. Performing an integration by part
\begin{align*}\begin{split}
&\phantom{{}={}}\sum_{j\geq \fj}\frac{1}{\mu_{j}^{N}-z}
-\frac{1}{\sqrt{4-E^2}}\int_{E+\mu_\fj^N/N\sqrt{4-E^2}}^\infty \frac{\rho_{sc}(x)}{x-z}\rd x\\
&=\int_{\mu_{\fj}^N}^\infty\frac{N\mu_{sc}\left[E+\frac{\mu_\fj^N}{N\sqrt{4-E^2}} , E+\frac{x}{N\sqrt{4-E^2}}\right]-|\{j: \mu_j^N \in [\mu_\fj^N, x]\}|}{(x-z)^2}\rd x
\end{split}\end{align*}
Thanks to the estimates \eqref{e:prelimbound} and \eqref{e:mjbound} 
\begin{align*}
\mu_\fj^N=2\pi \fj+\OO(\fj^{3/4})=\OO(\fj), \quad \left|N\int_{\mu_\fj^N/N\sqrt{4-E^2} }^{x/N\sqrt{4-E^2}}\rho_{sc}(E+x)\rd x-|\{j: \mu_j^N \in [\mu_\fj^N, x]\}|\right|\lesssim (1+x)^{3/4}.
\end{align*}
For $\fj\gg |z|$, we have the following bound
\begin{align}\label{e:tm3}
\left|\sum_{j\geq \fj}\frac{1}{\mu_{j}^{N}-z}
-\frac{1}{\sqrt{4-E^2}}\int_{E+\mu_\fj^N/N\sqrt{4-E^2}}^\infty \frac{\rho_{sc}(x)}{x-z}\rd x\right|
\lesssim\int_{\mu_{\fj}^N}^\infty\frac{(1+x)^{3/4}}{(x-z)^2}\rd x\lesssim \fj^{-1/4}.
\end{align}
By exactly the same argument, we also have 
\begin{align}\label{e:tm4}
\left|\sum_{j\leq -\fj}\frac{1}{\mu_{j}^{N}-z}
-\frac{1}{\sqrt{4-E^2}}\int_{-\infty}^{E+\mu_{-\fj}^N/N\sqrt{4-E^2}}\frac{\rho_{sc}(x)}{x-z}\rd x\right|
\lesssim\int_{-\infty}^{\mu_{-\fj}^N}\frac{(1+x)^{3/4}}{(x-z)^2}\rd x\lesssim \fj^{-1/4}.
\end{align}
The claim \eqref{e:large} follows from combining \eqref{e:two}, \eqref{e:mid}, \eqref{e:tm3} and \eqref{e:tm4}. This finishes the proof of Theorem \ref{t:bulk}.

\end{proof}

\bibliography{References.bib}{}

\begin{thebibliography}{10}

\bibitem{Open}
Open problems: {AIM} workshop on random matrices(dec 2010).
\newblock \url{https://aimath.org/WWN/randommatrices/randommatrices.pdf}.
\newblock Accessed: 2019-7-17.

\bibitem{MR3949269}
A.~Adhikari and Z.~Che.
\newblock Edge universality of correlated {G}aussians.
\newblock {\em Electron. J. Probab.}, 24:Paper No. 44, 25, 2019.

\bibitem{MR3149438}
M.~Adler, P.~van Moerbeke, and D.~Wang.
\newblock Random matrix minor processes related to percolation theory.
\newblock {\em Random Matrices Theory Appl.}, 2(4):1350008, 72, 2013.

\bibitem{MR3916110}
A.~Aggarwal.
\newblock Bulk universality for generalized {W}igner matrices with few moments.
\newblock {\em Probab. Theory Related Fields}, 173(1-2):375--432, 2019.

\bibitem{ALY}
A.~Aggarwal and H.-T.~Y. Patrick~Lopatto.
\newblock {GOE} statistics for {L}evy matrices.
\newblock {\em preprint: arXiv: 1806.07363}, 2018.

\bibitem{AEKS}
J.~Alt, L.~Erd{\H o}s, T.~Kr{\"u}ger, and D.~Schr{\"o}der.
\newblock Correlated random matrices: Band rigidity and edge universality.
\newblock {\em preprint: arXiv: 1804.07744}, 2018.

\bibitem{MR1818248}
Y.~Baryshnikov.
\newblock G{UE}s and queues.
\newblock {\em Probab. Theory Related Fields}, 119(2):256--274, 2001.

\bibitem{MR3729611}
R.~Bauerschmidt, J.~Huang, A.~Knowles, and H.-T. Yau.
\newblock Bulk eigenvalue statistics for random regular graphs.
\newblock {\em Ann. Probab.}, 45(6A):3626--3663, 2017.

\bibitem{MR3541852}
P.~Bourgade, L.~Erd\H{o}s, H.-T. Yau, and J.~Yin.
\newblock Fixed energy universality for generalized {W}igner matrices.
\newblock {\em Comm. Pure Appl. Math.}, 69(10):1815--1881, 2016.

\bibitem{MR2489161}
C.~Boutillier.
\newblock The bead model and limit behaviors of dimer models.
\newblock {\em Ann. Probab.}, 37(1):107--142, 2009.

\bibitem{MR3629874}
Z.~Che.
\newblock Universality of random matrices with correlated entries.
\newblock {\em Electron. J. Probab.}, 22:Paper No. 30, 38, 2017.

\bibitem{MR3941370}
L.~Erd\H{o}s, T.~Kr\"{u}ger, and D.~Schr\"{o}der.
\newblock Random matrices with slow correlation decay.
\newblock {\em Forum Math. Sigma}, 7:e8, 89, 2019.

\bibitem{MR2964770}
L.~Erd{\H{o}}s, A.~Knowles, H.-T. Yau, and J.~Yin.
\newblock Spectral statistics of {E}rd{\H o}s-{R}\'enyi {G}raphs {II}:
  {E}igenvalue spacing and the extreme eigenvalues.
\newblock {\em Comm. Math. Phys.}, 314(3):587--640, 2012.

\bibitem{MR2662426}
L.~Erd{\H{o}}s, S.~P{\'e}ch{\'e}, J.~A. Ramirez, B.~Schlein, and H.-T. Yau.
\newblock Bulk universality for {W}igner matrices.
\newblock {\em Comm. Pure Appl. Math.}, 63(7):895--925, 2010.

\bibitem{MR2661171}
L.~Erd{\H{o}}s, J.~Ramirez, B.~Schlein, T.~Tao, V.~Vu, and H.-T. Yau.
\newblock Bulk universality for {W}igner {H}ermitian matrices with
  subexponential decay.
\newblock {\em Math. Res. Lett.}, 17(4):667--674, 2010.

\bibitem{MR2481753}
L.~Erd{\H{o}}s, B.~Schlein, and H.-T. Yau.
\newblock Local semicircle law and complete delocalization for {W}igner random
  matrices.
\newblock {\em Comm. Math. Phys.}, 287(2):641--655, 2009.

\bibitem{MR2810797}
L.~Erd{\H{o}}s, B.~Schlein, and H.-T. Yau.
\newblock Universality of random matrices and local relaxation flow.
\newblock {\em Invent. Math.}, 185(1):75--119, 2011.

\bibitem{MR2919197}
L.~Erd{\H{o}}s, B.~Schlein, H.-T. Yau, and J.~Yin.
\newblock The local relaxation flow approach to universality of the local
  statistics for random matrices.
\newblock {\em Ann. Inst. Henri Poincar\'e Probab. Stat.}, 48(1):1--46, 2012.

\bibitem{MR3372074}
L.~Erd{\H{o}}s and H.-T. Yau.
\newblock Gap universality of generalized {W}igner and {$\beta$}-ensembles.
\newblock {\em J. Eur. Math. Soc. (JEMS)}, 17(8):1927--2036, 2015.

\bibitem{MR2847916}
L.~Erd{\H{o}}s, H.-T. Yau, and J.~Yin.
\newblock Universality for generalized {W}igner matrices with {B}ernoulli
  distribution.
\newblock {\em J. Comb.}, 2(1):15--81, 2011.

\bibitem{MR2981427}
L.~Erd{\H{o}}s, H.-T. Yau, and J.~Yin.
\newblock Bulk universality for generalized {W}igner matrices.
\newblock {\em Probab. Theory Related Fields}, 154(1-2):341--407, 2012.

\bibitem{MR2871147}
L.~Erd{\H{o}}s, H.-T. Yau, and J.~Yin.
\newblock Rigidity of eigenvalues of generalized {W}igner matrices.
\newblock {\em Adv. Math.}, 229(3):1435--1515, 2012.

\bibitem{MR3418747}
V.~Gorin and M.~Shkolnikov.
\newblock Multilevel {D}yson {B}rownian motions via {J}ack polynomials.
\newblock {\em Probab. Theory Related Fields}, 163(3-4):413--463, 2015.

\bibitem{MR3558206}
V.~Gorin and M.~Shkolnikov.
\newblock Interacting particle systems at the edge of multilevel {D}yson
  {B}rownian motions.
\newblock {\em Adv. Math.}, 304:90--130, 2017.

\bibitem{GNT}
F.~G{\"o}tze, A.~Naumov, and A.~Tikhomirov.
\newblock Local semicircle law under fourth moment condition.
\newblock {\em to appear in J. Theor. Probab.}, 2019.

\bibitem{MR3429490}
J.~Huang, B.~Landon, and H.-T. Yau.
\newblock Bulk universality of sparse random matrices.
\newblock {\em J. Math. Phys.}, 56(12):123301, 19, 2015.

\bibitem{MR1810949}
K.~Johansson.
\newblock Universality of the local spacing distribution in certain ensembles
  of {H}ermitian {W}igner matrices.
\newblock {\em Comm. Math. Phys.}, 215(3):683--705, 2001.

\bibitem{MR2268547}
K.~Johansson and E.~Nordenstam.
\newblock Eigenvalues of {GUE} minors.
\newblock {\em Electron. J. Probab.}, 11:no. 50, 1342--1371, 2006.

\bibitem{MR3914908}
B.~Landon, P.~Sosoe, and H.-T. Yau.
\newblock Fixed energy universality of {D}yson {B}rownian motion.
\newblock {\em Adv. Math.}, 346:1137--1332, 2019.

\bibitem{MR3405746}
J.~O. Lee and K.~Schnelli.
\newblock Edge universality for deformed {W}igner matrices.
\newblock {\em Rev. Math. Phys.}, 27(8):1550018, 94, 2015.

\bibitem{MR3502606}
J.~O. Lee, K.~Schnelli, B.~Stetler, and H.-T. Yau.
\newblock Bulk universality for deformed {W}igner matrices.
\newblock {\em Ann. Probab.}, 44(3):2349--2425, 2016.

\bibitem{MR3161313}
J.~O. Lee and J.~Yin.
\newblock A necessary and sufficient condition for edge universality of
  {W}igner matrices.
\newblock {\em Duke Math. J.}, 163(1):117--173, 2014.

\bibitem{MR2129906}
M.~L. Mehta.
\newblock {\em Random matrices}, volume 142 of {\em Pure and Applied
  Mathematics (Amsterdam)}.
\newblock Elsevier/Academic Press, Amsterdam, third edition, 2004.

\bibitem{NaVi}
J.~Najnudel and B.~Vir{\'a}g.
\newblock The bead process for beta ensemble.
\newblock {\em preprint: arXiv: 1904.00848}, 2019.

\bibitem{NaVi2}
J.~Najnudel and B.~Vir{\'a}g.
\newblock Uniform point variance bounds in classical beta ensembles.
\newblock {\em preprint: arXiv: 1904.00858}, 2019.

\bibitem{MR1647832}
Y.~G. Sinai and A.~B. Soshnikov.
\newblock A refinement of {W}igner's semicircle law in a neighborhood of the
  spectrum edge for random symmetric matrices.
\newblock {\em Funktsional. Anal. i Prilozhen.}, 32(2):56--79, 96, 1998.

\bibitem{MR3729037}
S.~Sodin.
\newblock Several applications of the moment method in random matrix theory.
\newblock In {\em Proceedings of the {I}nternational {C}ongress of
  {M}athematicians---{S}eoul 2014. {V}ol. {III}}, pages 451--475. Kyung Moon
  Sa, Seoul, 2014.

\bibitem{MR3403994}
S.~Sodin.
\newblock A limit theorem at the spectral edge for corners of time-dependent
  {W}igner matrices.
\newblock {\em Int. Math. Res. Not. IMRN}, (17):7575--7607, 2015.

\bibitem{MR1727234}
A.~Soshnikov.
\newblock Universality at the edge of the spectrum in {W}igner random matrices.
\newblock {\em Comm. Math. Phys.}, 207(3):697--733, 1999.

\bibitem{MR2669449}
T.~Tao and V.~Vu.
\newblock Random matrices: universality of local eigenvalue statistics up to
  the edge.
\newblock {\em Comm. Math. Phys.}, 298(2):549--572, 2010.

\bibitem{MR3306005}
T.~Tao and V.~Vu.
\newblock Random matrices: universality of local spectral statistics of
  non-{H}ermitian matrices.
\newblock {\em Ann. Probab.}, 43(2):782--874, 2015.

\bibitem{MR1257246}
C.~A. Tracy and H.~Widom.
\newblock Level-spacing distributions and the {A}iry kernel.
\newblock {\em Comm. Math. Phys.}, 159(1):151--174, 1994.

\bibitem{MR1385083}
C.~A. Tracy and H.~Widom.
\newblock On orthogonal and symplectic matrix ensembles.
\newblock {\em Comm. Math. Phys.}, 177(3):727--754, 1996.

\bibitem{MR0077805}
E.~P. Wigner.
\newblock Characteristic vectors of bordered matrices with infinite dimensions.
\newblock {\em Ann. of Math. (2)}, 62:548--564, 1955.

\bibitem{MR0083848}
E.~P. Wigner.
\newblock Characteristic vectors of bordered matrices with infinite dimensions.
  {II}.
\newblock {\em Ann. of Math. (2)}, 65:203--207, 1957.

\end{thebibliography}
\bibliographystyle{abbrv}

\end{document}